\newcommand{\change}[1]{\textcolor{black}{#1}}
\newcommand{\1}{{\bf 1}}
\newcommand{\rti}{\tilde{r}}
\newcommand{\xiti}{\tilde{\xi}}
\newcommand{\etati}{\tilde{\eta}}
\newcommand{\R}{\mathbb R}
\newcommand{\cb}{\mathcal B}
\newcommand{\cac}{\mathcal C}
\newcommand{\cd}{\mathcal D}
\newcommand{\ce}{\mathcal E}
\newcommand{\cf}{\mathcal F}
\newcommand{\ch}{\mathcal H}
\newcommand{\cn}{\mathcal N}
\newcommand{\cs}{\mathcal S}
\newcommand{\cw}{\mathcal W}
\newcommand{\al}{\alpha}
\newcommand{\ga}{\gamma}
\newcommand{\ka}{\kappa}
\newcommand{\la}{\lambda}
\newcommand{\si}{\sigma}
\newcommand{\lln}{\left|}
\newcommand{\rrn}{\right|}
\newtheorem{theorem}{Theorem}[section]
\newtheorem{corollary}[theorem]{Corollary}
\newtheorem{definition}[theorem]{Definition}
\newtheorem{lemma}[theorem]{Lemma}
\newtheorem{proposition}[theorem]{Proposition}
\theoremstyle{remark}
\newtheorem{remark}[theorem]{Remark}
\date{\today}
\begin{document}

\

\begin{center}
{\large\textbf{
A non-linear wave equation with fractional perturbation
}}\\~\\
Aur\'elien Deya\footnote{Institut \'Elie Cartan, Universit\' e de Lorraine, BP 70239, 54506 Vandoeuvre-l\`es-Nancy, France. Email: {\tt aurelien.deya@univ-lorraine.fr}}
\end{center}

\bigskip

{\small \noindent {\bf Abstract:} We study a $d$-dimensional wave equation model ($d\in \{2,3\}$) with quadratic non-linearity and stochastic forcing given by a space-time fractional noise. Two different regimes are exhibited, depending on the Hurst parameter $H=(H_0,\ldots,H_d) \in (0,1)^{d+1}$ of the noise: if $\sum_{i=0}^d H_i >  d-\frac12$, then the equation can be treated directly, while in the case $d-\frac34<\sum_{i=0}^d H_i\leq  d-\frac12$, the model must be interpreted in the Wick sense, through a renormalization procedure. \\
\indent Our arguments essentially rely on a fractional extension of the considerations of \cite{gubinelli-koch-oh} for the two-dimensional white-noise situation, and more generally follow a series of investigations related to stochastic wave models with polynomial perturbation. 

\bigskip



\section{Introduction and main results}

In this paper, we propose to study the following non-linear stochastic wave equation: 
\begin{equation}\label{1-d-quadratic-wave}
\left\{
\begin{array}{l}
\partial^2_t u-\Delta u + \rho^2  u^2=\dot{B} \, , \quad \quad t\in [0,T] \, , \ x\in \R^d \, ,\\
u(0,.)=\phi_0 \ , \ \partial_t u(0,.)=\phi_1 \, ,
\end{array}
\right.
\end{equation}
where $\phi_0,\phi_1$ are (deterministic) initial conditions in an appropriate Sobolev space, $\rho:\R^d \to \R$ is a smooth (deterministic) function with support included in a bounded domain $D\subset \R^d$, $d\in \{2,3\}$, and $\dot{B}\triangleq\partial_t\partial_{x_1} \cdots \partial_{x_{d}} B$ for some space-time fractional Brownian motion $B=B^{H}$ of Hurst index $H=(H_0,H_1,\ldots,H_{d}) \in (0,1)^{d+1}$. For the sake of clarity, let us here recall the specific definition of this process:

\begin{definition}
Fix a dimension parameter $d\geq 1$, as well as a complete filtered probability space $(\Omega,\mathcal{F},\mathbb{P})$. For any $H=(H_0,H_1,\ldots,H_d) \in (0,1)^{d+1}$, a centered Gaussian process $B:\Omega \times ([0,T]\times \R^d) \to \R$ is called a space-time fractional Brownian motion (or a fractional Brownian sheet) of Hurst index $H$ if its covariance function is given by the formula
$$\mathbb{E} \big[ B(s,x_1,\ldots,x_d)B(t,y_1,\ldots,y_d)\big] = R_{H_0}(s,t) \prod_{i=1}^d R_{H_i}(x_i,y_i) \ ,$$
where
$$R_{H_i}(x,y)\triangleq\frac12 (|x|^{2H_i}+|y|^{2H_i}-|x-y|^{2H_i}) \ .$$
In particular, a space-time fractional Brownian motion of Hurst index 
$$H=(\frac12,\ldots,\frac12)$$
is a Wiener process (and in this case the derivative $\dot{B}$ is a space-time white noise).
\end{definition}

\

Since the pioneering works of Mandelbrot and Van Ness, fractional noises have been considered as very natural stochastic perturbation models, that offer more flexibility than classical white-noise-driven equations. The involvement of fractional inputs first occured in the setting of standard differential equations and, even in this simple context, the procedure is known to raise numerous difficulties due to the non-martingale nature of the process. Sophisticated alternatives to It{\^o} theory must then come into the picture, whether fractional calculus, Malliavin calculus or rough paths theory, to mention just the most standard methods.  

\

More recently, fractional (multiparameter) noises have also appeared within SPDE models. A first widely-used example is given by white-in-time colored-in-space Gaussian noises, that can be treated in the classical framework of Walsh's martingale-measure theory \cite{walsh}, or with Da Prato-Zabczyk's infinite-dimensional approach to stochastic calculus \cite{daprato-zabczyk}. Such noise models have thus been applied to a large class of PDE dynamics, and the properties of the solutions to the resulting SPDEs are often well understood (see \cite{daprato-zabczyk} and the numerous references therein).

\

SPDEs involving a fractional-in-time noise are much more delicate to handle (Walsh and Da Prato-Zabczyk theories no longer apply in this case), and the related literature is in fact very scarce:

\smallskip

\noindent
$\bullet$ In the parabolic setting, one can first mention \cite{tindel-tudor-viens} for the study of a homogeneous equation with additive fractional Brownian motion, and the series of papers \cite{hu,hu-lu-nualart,hu-nualart-song} for the analysis of a linear multiplicative perturbation of the heat equation. Pathwise approaches to the parabolic fractional problem have also been considered in \cite{RHE,REE} using rough-paths ideas, and in \cite{deya,deya2} with the formalism of Hairer's theory of regularity structures.

\smallskip

\noindent
$\bullet$ For the wave equation, and to the best of our knowledge, the results are so far limited to the analysis of the specific one-dimensional ($d=1$) situation \cite{balan-jolis-quer,caithamer,erraoui-ouknine-nualart, quer-tindel}, and to the study of affine models when $d\geq 2$: the homogeneous equation with additive fractional noise in \cite{balan-tudor} and multiplicative linear noise in \cite{balan} (when the time-fractional order satisfies $H_0>1/2$ and the space covariance structure is given by a Riesz kernel of order $\al >d-2$).  

\

In brief, SPDEs, and especially stochastic hyperbolic equations, driven by a space-time fractional noise remain a widely-open field at this point. Note in particular that the wave-equation case cannot be treated within the recently-introduced framework of regularity structures (\cite{hai-14}), due to the lack of regularization properties for the wave kernel with respect to space-time Sobolev topologies.

\

With this general background in mind, let us now go back to the consideration of equation (\ref{1-d-quadratic-wave}). Our approach to the model will directly follow a series of investigations \cite{bourgain,burq-tzvetkov,gubinelli-koch-oh,oh-thomann,thomann} devoted to the study of stochastic wave (or Schr{\"o}dinger) equations involving a polynomial drift term. Our study can more specifically be seen as a fractional extension of the results of \cite{gubinelli-koch-oh} for the white-noise situation. In the last five references, and in our study as well, the strategy to handle the equation relies on a central ingredient that is often referred to as the Da Prato-Debussche's trick. Roughly speaking, it consists in regarding the solution $u$ of (\ref{1-d-quadratic-wave}) as some \enquote{perturbation} of the solution $\Psi$ to the associated \enquote{free} equation
\begin{equation}\label{equation-psi}
\left\{
\begin{array}{l}
\partial^2_t \Psi -\Delta \Psi=\dot{B} \, , \quad \quad  t\in [0,T] \, , \ x\in \R^d \, ,\\
\Psi(0,.)=0\ , \ \partial_t \Psi(0,.)=0  \, .
\end{array}
\right. 
\end{equation}
In fact, staying at a heuristic level, observe that the difference process $v\triangleq u-\Psi$ satisfies (morally) the equation
\begin{equation}\label{equa-deter-base-intro}
\left\{
\begin{array}{l}
\partial^2_t v -\Delta v+ \rho^2 (v^2+2v\cdot \Psi+\Psi^2)=0\, , \quad \quad t\in [0,T] \, , \ x\in \R^d \, ,\\
v(0,.)=\phi_0 \ , \ \partial_t v(0,.)=\phi_1 \, .
\end{array}
\right.
\end{equation}
The key of the method then lies in the fact that, once endowed with a good understanding of the pair $(\Psi,\Psi^2)$, equation (\ref{equa-deter-base-intro}) turns out to be much more tractable than the original equation (\ref{equation-psi}), and can be solved with pathwise arguments. The procedure thus emphasizes the following idea: to some extent, the difficulties behind the analysis of equation (\ref{equation-psi}) reduce to the difficulties in the study of the two processes $\Psi$ and $\Psi^2$. Note in particular that this general approach offers a clear splitting between the stochastic part of the analysis (i.e., the study of $(\Psi,\Psi^2)$), and the deterministic part of the problem (i.e., the pathwise study of (\ref{equa-deter-base-intro})). This decomposition is very reminiscent of the spirit of rough paths (or regularity structures) theory, where the solution of the problem is also built in a deterministic way around a stochastically-constructed object.  

\

The solution $\Psi$ of (\ref{equation-psi}) is therefore expected to play a fundamental role in the analysis, and a first step consists of course in providing a clear definition of this process (we recall that the space-time fractional setting is not exactly standard). To this end, we will appeal to a natural approximation procedure and construct $\Psi$ as the limit of a sequence of (classical) solutions driven by a smooth approximation $\dot{B}_n$ of $\dot{B}$ (or equivalently a smooth approximation $B_n$ of $B$). Just as in \cite{deya,deya2}, the approximation that we will consider here is derived from the so-called harmonizable representation of the space-time fractional Brownian motion (see e.g. \cite{samo-taqqu}), that is the formula (valid for every $H=(H_0,\ldots,H_d)\in (0,1)^{d+1}$)  
\begin{equation*}
B(t,x_1,\ldots,x_d)= c_{H} \int_{\xi\in \R}\int_{\eta \in \R^d} \widehat{W}(d\xi,d\eta) \frac{e^{\imath t\xi}-1}{|\xi|^{H_0+\frac12}} \prod_{i=1}^d \frac{e^{\imath x_i \eta_i }-1}{|\eta_i|^{H_i+\frac12}} \ ,
\end{equation*}
where $c_H >0$ is a suitable constant and $\widehat{W}$ stands for the Fourier transform of a space-time white noise in $\R^{d+1}$, defined on some complete filtered probability space $(\Omega,\mathcal{F},\mathbb{P})$. The approximation $(B_n)_{n\geq 1}$ of $B$ is then defined as
\begin{equation}
B_n(t,x_1,\ldots,x_d)\triangleq c_{H} \int_{|\xi|\leq 2^n}\int_{|\eta|\leq 2^n} \widehat{W}(d\xi,d\eta) \frac{e^{\imath t\xi}-1}{|\xi|^{H_0+\frac12}} \prod_{i=1}^d \frac{e^{\imath x_i \eta_i }-1}{|\eta_i|^{H_i+\frac12}} \ .
\end{equation}
It is readily checked that for all fixed $H=(H_0,H_1,\ldots,H_d)\in (0,1)^{d+1}$ and $n\geq 1$, the so-defined process $B_n$ indeed corresponds to a smooth function (almost surely). Accordingly, the associated equation
\begin{equation}\label{equation-psi-n}
\left\{
\begin{array}{l}
\partial^2_t \Psi_n -\Delta \Psi_n=\dot{B}_n \, , \quad \quad  t\in [0,T] \, ,  \ x\in \R^d \, ,\\
\Psi_n(0,.)=0\ ,  \ \partial_t \Psi_n(0,.)=0 \, , 
\end{array}
\right.
\end{equation}
falls within the class of standard hyperbolic systems, for which a unique (global) solution $\Psi_n$ is known to exist. Our first result now reads as follows:

\begin{proposition}\label{prop:regu-psi}
\change{Let $d\geq 1$ and $\rho:\R^d \to \R$ be a smooth compactly-supported function. Then,} for every $(H_0,H_1,\ldots,H_d)\in (0,1)^{d+1}$, \change{$(\rho\Psi_n)_{n\geq 1}$} is a Cauchy sequence in the space $L^p(\Omega;L^\infty([0,T];\mathcal{W}^{-\al,p}(\change{\R^d})))$, for all $p\geq 2$ and 
\begin{equation}\label{regu}
\al >d-\frac12- \sum_{i=0}^d H_i \ .
\end{equation}
In particular,  $(\change{\rho \Psi_n})_{n\geq 1}$ converges to a limit in $L^p(\Omega;L^\infty([0,T];\mathcal{W}^{-\al,p}(\change{\R^d})))$, that we denote by $\change{\rho\Psi}$. 
\end{proposition}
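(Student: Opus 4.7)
The plan is to exploit the harmonizable form of $B_n$ to write $\Psi_n$ as an explicit Wiener integral against $\widehat{W}$, and then reduce everything to one variance computation via Gaussianity.

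First, solving \eqref{equation-psi-n} Fourier mode by Fourier mode (each plane wave $e^{i\xi s}e^{i\eta\cdot y}$ in the forcing produces, with zero Cauchy data, a solution of the form $Q(t,\xi,|\eta|)\,e^{i\eta\cdot x}$ via the ODE $y''+|\eta|^2 y = e^{i\xi s}$) yields
\[
\Psi_n(t,x) = c_H \int_{|\xi|,|\eta|\leq 2^n} \widehat{W}(d\xi,d\eta)\, \frac{i^{d+1}\,\xi\,\eta_1\cdots\eta_d}{|\xi|^{H_0+\frac12}\prod_j|\eta_j|^{H_j+\frac12}}\, Q(t,\xi,|\eta|)\, e^{i\eta\cdot x},
\]
where
\[
Q(t,\xi,|\eta|) := \int_0^t \frac{\sin((t-s)|\eta|)}{|\eta|}\, e^{i\xi s}\, ds \;=\; \frac{e^{i\xi t} - \cos(t|\eta|) - \frac{i\xi}{|\eta|}\sin(t|\eta|)}{|\eta|^2-\xi^2}.
\]
Since $(1-\Delta)^{-\alpha/2}\Psi_n(t,x)$ lies in the first Wiener chaos and its variance $V_n(t)$ is independent of $x$ by translation invariance, Gaussian hypercontractivity yields $\mathbb{E}\|\Psi_n(t,\cdot)\|_{\mathcal{W}^{-\alpha,p}(D)}^p \leq C_p |D|\, V_n(t)^{p/2}$, with
\[
V_n(t) \;=\; c_H^2 \int_{|\xi|,|\eta|\leq 2^n} (1+|\eta|^2)^{-\alpha}\,|\xi|^{1-2H_0}\!\prod_{j=1}^d |\eta_j|^{1-2H_j}\,|Q(t,\xi,|\eta|)|^2\, d\xi\, d\eta.
\]
The identical identity for $\Psi_n - \Psi_m$ restricts the integration to the symmetric frequency shell, so the Cauchy property will follow, by dominated convergence, once the extension $V_\infty(t)$ to the whole of $\R^{d+1}$ is shown to be finite.

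The analytic core is then the sharp $\xi$-integral bound
\[
\int_\R |\xi|^{1-2H_0}\,|Q(t,\xi,|\eta|)|^2\, d\xi \;\lesssim_{t,H_0}\; (1+|\eta|)^{-1-2H_0}.
\]
I would prove this by interpreting the left-hand side, via Plancherel with fractional weight, as a homogeneous Sobolev-type norm of $f(s):=\frac{\sin((t-s)|\eta|)}{|\eta|}\mathbf{1}_{[0,t]}(s)$: for $H_0<\tfrac12$ using the Gagliardo formula with kernel $|s-s'|^{-2+2H_0}$ and splitting into scales above and below $|\eta|^{-1}$ (using $|f'|\leq 1$ and $\|f\|_\infty\leq|\eta|^{-1}$); for $H_0=\tfrac12$ plain Plancherel; and for $H_0>\tfrac12$ the Riesz identity with kernel $|s-s'|^{2H_0-2}$, after which the oscillatory double integral is evaluated in Fourier variables by exploiting the peaks of $\widehat{\sin(u|\eta|)\mathbf{1}_{[0,t]}}$ at $\xi=\pm|\eta|$. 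Plugging this estimate into $V_\infty(t)$ and passing to polar coordinates on $\R^d$ reduces the finiteness to that of $\int_1^\infty r^{2d-2-2\alpha-2\sum_{i=0}^d H_i}\, dr$, which holds precisely under \eqref{regu}.

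Finally, to upgrade the pointwise-in-$t$ bound into an $L^\infty([0,T])$-in-time bound, I would run a Kolmogorov continuity argument. The same Gaussian scheme applied to $\partial_t\Psi_n$ (equivalent to replacing $Q$ by $\partial_tQ=\int_0^t\cos((t-s)|\eta|)e^{i\xi s}ds$ in $V_n$, and producing one derivative less of Sobolev regularity through the same $\xi$-integral technique) combined with interpolation between $\Psi_n\in\mathcal{W}^{-\alpha_0,p}$ and $\partial_t\Psi_n\in\mathcal{W}^{-\alpha_0-1,p}$, for some $\alpha_0$ slightly above threshold, yields $\mathbb{E}\|\Psi_n(t)-\Psi_n(s)\|_{\mathcal{W}^{-\alpha,p}}^p\lesssim|t-s|^{\beta p}$ for some $\beta>0$ provided $\alpha$ is strictly above \eqref{regu}. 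Taking $p$ large enough that $\beta p>1$, Kolmogorov's criterion supplies both the sup-in-time bound and, applied to $\Psi_n-\Psi_m$, the Cauchy property in $L^p(\Omega;L^\infty([0,T];\mathcal{W}^{-\alpha,p}(D)))$. The main obstacle I anticipate is the sharp $\xi$-integral bound above: the factor $(1+|\eta|)^{-1-2H_0}$ relies on the exact cancellation at the wave-cone resonance $|\xi|=|\eta|$ (where numerator and denominator of $Q$ vanish simultaneously), and the naive $L^\infty$ estimate $|Q|\lesssim|\eta|^{-1}$ gives only $|\eta|^{-2}$, which would force the suboptimal threshold $\alpha>d-\sum_iH_i$; extracting the extra $\tfrac12$ in \eqref{regu} is the real technical content.
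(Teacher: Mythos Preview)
Your proposal is correct and reaches the right threshold, but the technical execution differs from the paper's in two places, and it is worth knowing how.

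For the key $\xi$-integral, the paper does not interpret $\int_\R |\xi|^{1-2H_0}|Q(t,\xi,r)|^2\,d\xi$ as a fractional Sobolev norm of the time profile; instead it establishes three elementary pointwise bounds on $\gamma_t(\xi,r)=Q(t,\xi,r)$ (a trivial one, one from integration by parts in time, and one from the exact formula $\gamma_t=\frac{1}{2r}[\frac{e^{irt}-e^{i\xi t}}{\xi-r}-\frac{e^{-irt}-e^{i\xi t}}{\xi+r}]$), and then splits the $\xi$-integral into the regions $||\xi|-r|\gtrless |\xi|/2$ around the wave-cone resonance. This yields $\min(1,r^{-2-2H_0}+r^{-1-2H_0+\varepsilon})$, i.e.\ the same threshold as your $(1+r)^{-1-2H_0}$ up to an arbitrarily small $\varepsilon$-loss that is harmless under the strict inequality \eqref{regu}. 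Your Plancherel/Gagliardo/Riesz route is more conceptual, but the paper's direct bounds have the advantage of being completely elementary and, more importantly, of building the time increment in simultaneously: the same lemma controls $\gamma_{s,t}=\gamma_t-\gamma_s$ with an explicit factor $|t-s|^{\kappa}$, so the Garsia--Rodemich--Rumsey step is immediate without any separate study of $\partial_t\Psi_n$ or interpolation in the Sobolev scale. A second practical payoff is that the very same $\gamma_{s,t}$ estimates are recycled verbatim in the (harder) proof of Proposition~\ref{prop:regu-psi-order-two}; your approach would have to redo the Sobolev-norm identification there in a more intricate bilinear setting. Finally, note that the angular integral after polar coordinates (checking that $\prod_j|\eta_j|^{1-2H_j}$ integrates over the sphere) deserves a line---the paper writes it out explicitly---but this is routine since all the singular exponents are below $1$.
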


This approach of a fractional equation via a regularization procedure is of course a standard strategy, that is also used for instance in rough paths or regularity structure theory (observe that the interpretation of the equation in \cite{hu-lu-nualart} leans on an approximation method as well). 

\smallskip

\begin{remark}
In \cite{balan-tudor}, the authors tackle the fractional model (\ref{equation-psi}) using a Malliavin-calculus approach, which provides an interpretation and a solution of the equation that may be considered as more intrinsic (since it does not depend on any approximation of the noise). In fact, we think that this Malliavin-calculus solution to (\ref{equation-psi}) could be identified with the limit process $\Psi$ exhibited in Proposition \ref{prop:regu-psi}, but we will not dwell on this identification procedure, since we find it relatively removed from the purpose of our analysis and also because it would require the introduction of the whole Malliavin-calculus framework. Observe however that the results of \cite{balan-tudor} also highlight the threshold $\sum_{i=0}^d H_i= d-\frac12$  (with the additional assumption $H_0>\frac12$) for $\Psi$ to be either a function or a distribution. 
\end{remark}

\smallskip

Based on Proposition \ref{prop:regu-psi}, the limit process $\Psi$ will therefore be considered (almost surely) as a function when $\sum_{i=0}^d H_i> d-\frac12$ and as a distribution otherwise. In the latter situation, and when turning to the study of the auxiliary equation (\ref{equa-deter-base-intro}), one must then face the problem of interpreting the product $\Psi^2$. Just as in \cite{gubinelli-koch-oh,oh-thomann}, we will actually understand this product in the Wick sense, which, again, can be made rigorous through an approximation method, combined with a renormalization procedure:

\begin{proposition}\label{prop:regu-psi-order-two}
\change{Let $d\geq 1$ and $\rho:\R^d \to \R$ be a smooth compactly-supported function. Also,} let $(H_0,H_1,\ldots,H_d)\in (0,1)^{d+1}$ such that 
\begin{equation}\label{constraint-h-i}
d-\frac34 < \sum_{i=0}^d H_i \leq  d-\frac12 \ ,
\end{equation}
and consider the Wick-renormalized product $\widehat{\mathbf{\Psi}}^{\mathbf{2}}_n(t,y)\triangleq \Psi_n(t,y)^2-\si_n(t,y)$, with $\si_n(t,y) \triangleq \mathbb{E}\big[ \Psi_n(t,y)^2\big]$.
Then $(\change{\rho^2 \widehat{\mathbf{\Psi}}^{\mathbf{2}}_n})_{n\geq 1}$ is a Cauchy sequence in the space $L^p(\Omega;L^\infty([0,T];\mathcal{W}^{-2\al,p}(\change{\R^d})))$,
for all $p\geq 2$ and 
\begin{equation}\label{regu-2}
\al >d-\frac12- \sum_{i=0}^d H_i \ .
\end{equation}
In particular,  $(\change{\rho^2 \widehat{\mathbf{\Psi}}^{\mathbf{2}}_n})_{n\geq 1}$ converges to a limit in $L^{p}(\Omega;L^\infty([0,T];\mathcal{W}^{-2\al,p}(\change{\R^d})))$, that we denote by $\change{\rho^2 \widehat{\mathbf{\Psi}}^{\mathbf{2}}}$.
\end{proposition}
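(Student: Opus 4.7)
The plan is to leverage the fact that $\Psi_n(t,y)$ lives in the first Wiener chaos of $\widehat W$: combining the harmonizable representation of $B_n$ with the Duhamel formula for (\ref{equation-psi-n}), one can write
\[
\Psi_n(t,y) = \int \widehat W(d\xi,d\eta)\, \mathcal K^n_{t,y}(\xi,\eta),
\]
where $\mathcal K^n_{t,y}$ is the cut-off Fourier kernel built from the symbol $\mathbf{1}_{|\xi|,|\eta|\leq 2^n}\,|\xi|^{\frac12-H_0}\prod_i|\eta_i|^{\frac12-H_i}\, e^{\imath y\cdot\eta}$ on the one hand, and from the wave Fourier multiplier $\frac{\sin((t-s)|\eta|)}{|\eta|}$ integrated against $e^{\imath s\xi}$ for $s\in[0,t]$ on the other hand. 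By definition, $\widehat{\mathbf{\Psi}}^{\mathbf{2}}_n(t,y)$ is then the second-chaos projection of $\Psi_n(t,y)^2$, that is, the double Wiener-It\^o integral with symmetric kernel $\mathcal K^n_{t,y}(\xi_1,\eta_1)\mathcal K^n_{t,y}(\xi_2,\eta_2)$; and the difference $\widehat{\mathbf{\Psi}}^{\mathbf{2}}_n - \widehat{\mathbf{\Psi}}^{\mathbf{2}}_m$ corresponds to the analogous truncation difference of kernels.

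The heart of the argument is a deterministic Fourier estimate. Using the Wiener-It\^o isometry on the second chaos together with Fubini, the quantity
\[
\mathbb{E}\|\chi_D\cdot(\widehat{\mathbf{\Psi}}^{\mathbf{2}}_n(t,\cdot)-\widehat{\mathbf{\Psi}}^{\mathbf{2}}_m(t,\cdot))\|_{H^{-2\alpha}(\R^d)}^2
\]
can be rewritten as an integral in the Fourier variables $(\xi_1,\eta_1,\xi_2,\eta_2)$, weighted by $(1+|\zeta|^2)^{-2\alpha}$ in the dual spatial variable $\zeta$. I would estimate it via a dyadic power-counting argument: first integrate in $\zeta$ (producing an essentially compactly-supported factor thanks to $\chi_D\in C_c^\infty$), then in $\xi_1,\xi_2$ (each contributing a polynomial factor $|\xi_j|^{1-2H_0}$ combined with the bounded wave time-integral), and finally tackle the remaining $\eta_1,\eta_2$ integrals by a Littlewood-Paley decomposition that tracks the singularities $|\eta_{j,i}|^{1-2H_i}$ together with the wave-kernel factor $|\eta_j|^{-2}$. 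A careful accounting then shows the integral to be bounded by $C_T\cdot 2^{-\varepsilon\min(n,m)}$ for some $\varepsilon>0$, under the assumption (\ref{constraint-h-i}) and for $\alpha$ satisfying (\ref{regu-2}).

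Since each $\widehat{\mathbf{\Psi}}^{\mathbf{2}}_n - \widehat{\mathbf{\Psi}}^{\mathbf{2}}_m$ belongs to the fixed second Wiener chaos, Nelson's hypercontractivity inequality transforms this $L^2(\Omega)$ bound into the desired $L^p(\Omega)$ control for every $p\geq 2$, and Fubini then yields convergence in $L^p(\Omega;\mathcal W^{-2\alpha,p}(D))$ pointwise in $t\in[0,T]$. To upgrade to $L^\infty_t$, I would apply the same methodology to the time increment $\widehat{\mathbf{\Psi}}^{\mathbf{2}}_n(t,\cdot)-\widehat{\mathbf{\Psi}}^{\mathbf{2}}_n(s,\cdot)$, extracting an additional factor $|t-s|^\gamma$ for some $\gamma>0$ from the increments $e^{\imath t\xi_j}-e^{\imath s\xi_j}$ and from the time-increment of the wave symbol, and then conclude via a Garsia-Rodemich-Rumsey/Kolmogorov continuity argument on $[0,T]$.

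The main obstacle, in my view, is the deterministic Fourier estimate. In contrast with the parabolic setting, the wave symbol $\sin((t-s)|\eta|)/|\eta|$ provides no gain of integrability in $\eta$, and particular care must be given to the resonant region $|\xi|\sim|\eta|$ where the $s$-integration only yields a bounded factor rather than a decay in $\big||\eta|^2-\xi^2\big|^{-1}$. Identifying the sharp threshold $\sum_i H_i > d-\frac34$ essentially amounts to checking that pairing two copies of $\mathcal K^n$ in the second chaos produces exactly a $\frac14$ gain of integrability compared to the first-chaos bound of Proposition \ref{prop:regu-psi}, and that this gain is saturated at the regularity level (\ref{regu-2}).
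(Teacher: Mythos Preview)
Your overall architecture---second Wiener chaos representation, Wiener--It\^o isometry reducing the moment to a Fourier integral, hypercontractivity to pass to $L^p(\Omega)$, and a Garsia--Rodemich--Rumsey argument for the $L^\infty_t$ upgrade---is exactly the paper's strategy. The paper also works directly with the pointwise quantity $\mathbb{E}\big[|\mathcal{F}^{-1}(\{1+|\cdot|^2\}^{-\al}\mathcal{F}(\widehat{\mathbf{\Psi}}^{\mathbf{2}}_{n,m}(s,t;\cdot)))(x)|^2\big]$ rather than introducing a cutoff $\chi_D$, but this is a cosmetic difference.

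Where your sketch is imprecise is in locating the mechanism behind the threshold $\al<\frac14$ (equivalently $\sum_i H_i>d-\frac34$). You point to the wave-kernel resonance $|\xi|\sim|\eta|$; that resonance is real and is already absorbed in the first-order analysis (it is the content of Corollary~\ref{coro:tech} in the paper). The restriction $\al<\frac14$ does \emph{not} come from there. It comes from the \emph{spatial} high--high resonance between the two copies of the kernel: after the isometry, the Sobolev weight appears as $\{1+|\eta-\etati|^2\}^{-2\al}$ with $(\eta,\etati)$ the two space-frequency variables, and the dangerous region is $D_1=\{\frac{|\eta|}{2}<|\etati|<\frac{3|\eta|}{2}\}$, where this weight is nearly useless. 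On $D_1$ the paper rescales $|\etati|=|\eta| r$ with $r\in(\frac12,\frac32)$ and is left with
\[
\int_1^\infty \frac{d\rho}{\rho^{4(\sum_i H_i)-4d+3-4\varepsilon}}\int_{\frac12}^{\frac32}\frac{dr}{\{1+\rho^2(1-r)^2\}^{2\al}}\ \leq\ \int_1^\infty \frac{d\rho}{\rho^{4\al+4(\sum_i H_i)-4d+3-4\varepsilon}}\int_{\frac12}^{\frac32}\frac{dr}{|1-r|^{4\al}}\ ,
\]
and the inner integral is finite precisely when $4\al<1$. This is where the $\frac14$ sits; it is not an abstract ``$\frac14$ gain from pairing'' but a concrete one-dimensional integrability condition on the angular variable between the two spatial frequencies. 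Your Littlewood--Paley plan would uncover this if carried out (it is the high--high$\to$low paraproduct block), but as written your proposal misattributes the obstruction and does not display the step that actually forces the hypothesis~(\ref{constraint-h-i}).
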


\smallskip

Two distinct treatments of the problem (corresponding to the two regimes $d-\frac12-\sum_{i=0}^d H_i < \al <0$ and $d-\frac12 -\sum_{i=0}^d H_i \geq \al \geq 0$ in Proposition \ref{prop:regu-psi}) are thus to occur in our analysis, with a clear transition phenomenon regarding the interpretation of the product $\Psi^2$ and the need for renormalization. In order to encompass these two regimes into a single framework, let us slightly extend the formulation of (\ref{equa-deter-base-intro}) and consider the more general (deterministic) equation
\change{\begin{equation}\label{equa-deter-base}
\left\{
\begin{array}{l}
\partial^2_t v -\Delta v+ \rho^2 v^2+(\rho v)\cdot \mathbf{\Pi}^{\mathbf{1}}+\mathbf{\Pi}^{\mathbf{2}}=0\, , \quad \quad  t\in [0,T] \, , \ x\in \R^d \, ,\\
v(0,.)=\phi_0 \ , \ \partial_t v(0,.)=\phi_1 \, ,
\end{array}
\right.
\end{equation}}
where the two \enquote{parameters} $\mathbf{\Pi}^{\mathbf{1}}$ and $\mathbf{\Pi}^{\mathbf{2}}$ will be either functions or distributions in suitable Sobolev spaces. Our interpretation of the model (\ref{1-d-quadratic-wave}) can now be expressed as follows:

\begin{definition}
Let $\Psi$ and $\widehat{\mathbf{\Psi}}^{\mathbf{2}}$ be the processes defined in Proposition \ref{prop:regu-psi} and Proposition \ref{prop:regu-psi-order-two}.

\smallskip

\noindent
$(i)$ A stochastic process $(u(t,x))_{t\in [0,T],x\in \R^d}$ is said to be a solution (on $[0,T]$) of the equation
\begin{equation}\label{1-d-quadratic-wave-not-renormalized-1}
\left\{
\begin{array}{l}
\partial^2_t u-\Delta u + \rho^2  u^2=\dot{B} \, , \quad \quad  t\in [0,T] \, , \ x\in \R^d \, ,\\
u(0,.)=\phi_0 \ , \ \partial_t u(0,.)=\phi_1 \, ,
\end{array}
\right.
\end{equation}
if, almost surely, $\Psi$ is a function and the auxiliary process $v:=u-\Psi$ is a mild solution (on $[0,T]$) of Equation (\ref{equa-deter-base}) with $\mathbf{\Pi}^{\mathbf{1}}\triangleq 2\change{\rho}\Psi$ and $\mathbf{\Pi}^{\mathbf{2}}\triangleq \change{\rho^2}\Psi^2$.

\smallskip

\noindent
$(ii)$ A stochastic process $(u(t,x))_{t\in [0,T],x\in \R^d}$ is said to be a solution (on $[0,T]$) of the Wick-renormalized equation
\begin{equation}\label{1-d-quadratic-wave-renormalized}
\left\{
\begin{array}{l}
\partial^2_t u-\Delta u + \rho^2  :\! u^2\! \!: \ =\dot{B} \, , \quad \quad  t\in [0,T] \, , \ x\in \R^d \, ,\\
u(0,.)=\phi_0 \ , \ \partial_t u(0,.)=\phi_1 \, ,
\end{array}
\right.
\end{equation}
if, almost surely, the auxiliary process $v:=u-\Psi$ is a mild solution (on $[0,T]$) of Equation (\ref{equa-deter-base}) with $\mathbf{\Pi}^{\mathbf{1}}\triangleq 2\change{\rho}\Psi$ and $\mathbf{\Pi}^{\mathbf{2}}\triangleq \change{\rho^2}\widehat{\mathbf{\Psi}}^{\mathbf{2}}$.
\end{definition}

The results of Section \ref{sec:auxiliary-equation} will in fact allow us to give a clear sense to the notion of a mild solution to (\ref{equa-deter-base}) (with values in a specific space), thus completing the above definition. With this setting in mind, we can finally state the main results of our study.

\begin{theorem}\label{main-theo}
Let $d\in \{2,3\}$ and $(\phi_0,\phi_1) \in \ch^1(\R^d) \times L^2(\R^d)$. Then the following picture holds true:

\smallskip

\noindent
$(i)$
If $\sum_{i=0}^d H_i>d-\frac12$, then, almost surely, there exists a time $T_0>0$ such that the equation (\ref{1-d-quadratic-wave-not-renormalized-1}) admits a unique solution $u$ in the set 
\begin{equation}\label{defi-set-s-t}
\cs_{T_0}\triangleq \, \Psi   + X(T_0) \, , \quad \text{where}\ \ X(T_0)\triangleq L^\infty([0,T_0];\ch^{1}(\R^d)) \, .
\end{equation}

\smallskip

\noindent
$(ii)$ If $d-\frac34<\sum_{i=0}^d H_i\leq  d-\frac12$, then, almost surely, there exists a time $T_0>0$ such that the Wick-renormalized equation (\ref{1-d-quadratic-wave-renormalized})
admits a unique solution $u$ in the set 
\begin{equation}\label{defi-set-s-t-rough}
\cs^s_{T_0}\triangleq \, \Psi   + X^{\frac12}(T_0) \, , \quad \text{where}\ \ X^{\frac12}(T_0)\triangleq L^\infty([0,T_0];\ch^{\frac12}(\R^d)) \, ,
\end{equation}
\end{theorem}

\

Using the continuity properties of the solution $v$ of (\ref{equa-deter-base}) with respect to $(\mathbf{\Pi}^{\mathbf{1}},\mathbf{\Pi}^{\mathbf{2}})$, we will also be able to \enquote{lift} the convergence statements for $\Psi$ and $\Psi^2$ (i.e., the results of Propositions \ref{prop:regu-psi} and \ref{prop:regu-psi-order-two}) at the level of the equation, which will offer the following alternative interpretation of the model:

\begin{theorem}\label{main-theo-lim}
Let $d\in \{2,3\}$ and $(\phi_0,\phi_1) \in \ch^1(\R^d) \times L^2(\R^d)$. Then the following picture holds true:

\smallskip

\noindent
$(i)$
If $\sum_{i=0}^d H_i>d-\frac12$, consider the sequence $(u_n)_{n\geq 1}$ of (classical) solutions to the equation
\begin{equation}\label{1-d-quadratic-wave-not-renormalized}
\left\{
\begin{array}{l}
\partial^2_t u_n-\Delta u_n + \rho^2  u_n^2 =\dot{B}_n \, , \quad \quad  t\in [0,T_0] \, , \ x\in \R^d \, ,\\
u(0,.)=\phi_0 \ , \ \partial_t u(0,.)=\phi_1 \, .
\end{array}
\right.
\end{equation}
Then, almost surely, there exists a time $T_0>0$ and a subsequence of $(u_n)$ that converges in the space $L^\infty([0,T_0];L^2(D))$ to the solution $u$ exhibited in Theorem \ref{main-theo} (item $(i)$).

\smallskip

\noindent
$(ii)$ If $d-\frac34<\sum_{i=0}^d H_i\leq  d-\frac12$, set $\si_n(t)\triangleq \mathbb{E}[\Psi_n(t,x)^2]$ and consider the sequence $(u_n)_{n\geq 1}$ of (classical) solutions to the renormalized equation
\begin{equation}\label{1-d-quadratic-wave-renormalized-lim}
\left\{
\begin{array}{l}
\partial^2_t u_n(t,x)-\Delta u_n (t,x)+ \rho^2(x) ( u_n(t,x)^2-\si_n(t))=\dot{B}_n(t,x) \, , \\
u_n(0,.)=\phi_0 \ , \ \partial_t u_n(0,.)=\phi_1 \, .
\end{array}
\right.
\end{equation}
for $t\in [0,T_0],x\in \R^d$.
Then
\begin{equation}\label{estim-cstt}
\si_n(t)\stackrel{n\to \infty}{\sim} 
\left\lbrace
\begin{array}{ll}
c^1_{H}\, t \, 2^{2n(d-\frac12-\sum_{i=0}^d H_i)}& \quad \text{if} \quad \sum_{i=0}^d H_i<  d-\frac12  \ ,\\
c^2_{H}\, t\,  n & \quad \text{if} \quad \sum_{i=0}^d H_i=  d-\frac12 \ ,
\end{array}
\right.
\end{equation}
for some constants $c^1_{H},c^2_{H}$, and, almost surely, there exists a time $T_0>0$ and a subsequence of $(u_n)$ that converges in the space $L^\infty([0,T_0];\ch^{-\al}(D))$ to the solution $u$ exhibited in Theorem \ref{main-theo} (item $(ii)$), for every $\al >d-\frac12- \sum_{i=0}^d H_i$.
\end{theorem}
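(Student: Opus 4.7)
The plan is to build on the Da Prato-Debussche decomposition underlying Theorem \ref{main-theo}: for each $n\geq 1$, set $v_n:=u_n-\Psi_n$, where $\Psi_n$ is the smooth solution of (\ref{equation-psi-n}). Subtracting (\ref{equation-psi-n}) from (\ref{1-d-quadratic-wave-not-renormalized}) (resp.\ (\ref{1-d-quadratic-wave-renormalized-lim})) shows that $v_n$ is a classical, and in particular mild, solution of the deterministic equation (\ref{equa-deter-base}) with $\mathbf{\Pi}^{\mathbf{1}}_n\triangleq 2\Psi_n$, and with $\mathbf{\Pi}^{\mathbf{2}}_n\triangleq \Psi_n^2$ in the first regime, resp.\ $\mathbf{\Pi}^{\mathbf{2}}_n\triangleq \widehat{\mathbf{\Psi}}^{\mathbf{2}}_n=\Psi_n^2-\si_n$ in the second regime. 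The whole argument thus reduces to proving convergence of the parameters $(\mathbf{\Pi}^{\mathbf{1}}_n,\mathbf{\Pi}^{\mathbf{2}}_n)$ and then passing to the limit in this deterministic equation through a continuous-dependence statement.

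For the convergence of the parameters, Proposition \ref{prop:regu-psi} yields, after extraction of a subsequence, $\Psi_n\to \Psi$ almost surely in $L^\infty([0,T];\cw^{-\al,p}(D))$ for any admissible exponents. In the first regime $\sum_{i=0}^d H_i>d-\frac12$, the parameter $\al$ may be chosen strictly negative so that $\Psi$ is in fact a H\"older function on $D$, and standard multiplicative estimates in Sobolev spaces with positive regularity provide $\Psi_n^2\to \Psi^2$ almost surely in $L^\infty_t L^p_x$. In the second regime, Proposition \ref{prop:regu-psi-order-two} and a further extraction yield $\widehat{\mathbf{\Psi}}^{\mathbf{2}}_n\to \widehat{\mathbf{\Psi}}^{\mathbf{2}}$ almost surely in $L^\infty([0,T];\cw^{-2\al,p}(D))$. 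On the corresponding almost sure event, $(\mathbf{\Pi}^{\mathbf{1}}_n)$ and $(\mathbf{\Pi}^{\mathbf{2}}_n)$ are in particular uniformly bounded in the Sobolev topologies relevant to Section \ref{sec:auxiliary-equation}.

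The key step is then to invoke the local-well-posedness theory developed in Section \ref{sec:auxiliary-equation}: it provides a unique mild solution $v$ of (\ref{equa-deter-base}) in $X^{\frac12}(T_0)$ on a time interval whose length depends only on the norms of $(\phi_0,\phi_1,\mathbf{\Pi}^{\mathbf{1}},\mathbf{\Pi}^{\mathbf{2}})$, together with Lipschitz-type continuity of the map $(\mathbf{\Pi}^{\mathbf{1}},\mathbf{\Pi}^{\mathbf{2}})\mapsto v$ on bounded sets. Applied to the uniformly bounded and converging sequence above, this yields an almost surely positive $T_0$ along which, passing to the same subsequence, $v_n\to v$ in $X^{\frac12}(T_0)$, where $v$ is exactly the solution provided by Theorem \ref{main-theo}. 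Since $X^{\frac12}(T_0)\hookrightarrow L^\infty_t L^2_x$, this gives $u_n=v_n+\Psi_n\to v+\Psi=u$ in $L^\infty([0,T_0];L^2(D))$ in case $(i)$, and in $L^\infty([0,T_0];\cw^{-\al,2}(D))$ in case $(ii)$, which is exactly the claim. The main technical obstacle here is precisely the uniformity in $n$ of the existence time $T_0$: this forces the estimates of Section \ref{sec:auxiliary-equation} to depend only on the Sobolev norms of the parameters, not on any finer structure.

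It remains to derive the asymptotics (\ref{estim-cstt}) for $\si_n(t)$. Starting from the mild representation of $\Psi_n$ and plugging in the harmonizable representation of $\dot{B}_n$, the It\^o isometry for $\widehat{W}$ turns $\si_n(t)=\mathbb{E}[\Psi_n(t,x)^2]$ into an explicit oscillatory integral over $\{|\xi|\leq 2^n\}\times\{|\eta|\leq 2^n\}$ whose integrand is homogeneous of degree $2(d-\frac12-\sum_{i=0}^d H_i)$ in $(\xi,\eta)$. In the sub-critical case $\sum_{i=0}^d H_i<d-\frac12$, the rescaling $(\xi,\eta)=2^n(\xi',\eta')$ extracts the factor $2^{2n(d-\frac12-\sum_{i=0}^d H_i)}$ and reduces the remaining integral to a convergent expression depending linearly on $t$, giving the first line of (\ref{estim-cstt}). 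In the critical case $\sum_{i=0}^d H_i=d-\frac12$, a dyadic decomposition of the frequency support isolates a logarithmic divergence in the radial variable, producing the $t\cdot n$ asymptotics. These computations are essentially a direct adaptation of those carried out in \cite{deya,deya2} in the parabolic setting.
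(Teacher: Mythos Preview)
Your treatment of the convergence in both regimes follows the paper's own proof closely: the reduction to $v_n=u_n-\Psi_n$, the use of Propositions \ref{prop:regu-psi} and \ref{prop:regu-psi-order-two} to obtain almost sure convergence (along a subsequence) and uniform boundedness of $\mathbf{\Pi}_n$, and the passage to the limit via the bounds (\ref{boun-gamma-1})--(\ref{boun-gamma-2}) are exactly the ingredients the paper uses. The only difference is one of presentation: where you simply invoke that the existence time in Corollaries \ref{cor:deterministic-result-d-i} and \ref{cor:deterministic-result-d} depends only on the norms of the data, the paper writes out the bootstrap explicitly, showing via a continuity argument on $T\mapsto \cn[v_n;X^s(T)]$ and the quadratic inequality stemming from (\ref{boun-gamma-1}) that the $v_n$ remain in a fixed ball of $X^s(T_0)$ for a uniform $T_0$. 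Both routes are valid.

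Your derivation of the asymptotics (\ref{estim-cstt}), however, contains a genuine gap. The integrand is \emph{not} homogeneous in $(\xi,\eta)$: under $(\xi,\eta)=2^n(\xi',\eta')$ one has $|\gamma_t(2^n\xi',2^n|\eta'|)|^2=2^{-4n}|\gamma_{2^n t}(\xi',|\eta'|)|^2$, so the time parameter is itself rescaled. A naive power count then yields the exponent $2n(d-1-\sum_i H_i)$, not the claimed one; the missing factor of $2^n$ and the linearity in $t$ must come from the large-$\tau$ behaviour of $\int |\gamma_\tau(\xi',r')|^2 |\xi'|^{1-2H_0}\,d\xi'$ as $\tau=2^n t\to\infty$, which is governed by the resonance $|\xi'|\approx r'$ and is precisely the nontrivial part. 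The paper does not argue by scaling: it uses the closed formula (\ref{exact-expression-gamma}) to expand $|\gamma_t(\xi,r)|^2$ explicitly, isolates the resonant piece $\frac{1-\cos(t(\xi-r))}{r^2(\xi-r)^2}$, and proves via two dedicated lemmas (Lemmas \ref{lem:tech} and \ref{lem:tech-2}) that this produces exactly $c\,t\int_1^{2^n}\frac{dr}{r^{1-\kappa}}+O(1)$ with $\kappa=2(d-\tfrac12-\sum_i H_i)$, while all remaining terms stay bounded. This resonance analysis is specific to the wave kernel and is not a direct adaptation of the parabolic computations in \cite{deya,deya2}.
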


As far as we know, Theorems \ref{main-theo} and \ref{main-theo-lim} are the first wellposedness results for a non-linear wave model involving a space-time fractional noise (at least beyond the very specific one-dimensional situation). Observe that the above change-of-regime phenomenon is especially relevant in the fractional setting, where the roughness parameter $H$ can be \enquote{continuously} modified in $(0,1)^{d+1}$ (contrary to the space parameter $d\in \{2,3,\ldots\}$).  

\smallskip

The rest of the paper is devoted to the proof of these successive statements. Let us just conclude this introduction with a few additional remarks about Theorems \ref{main-theo} and \ref{main-theo-lim}. 

\smallskip

\begin{remark}
The consideration of the linear combination $\sum_{i=0}^d H_i$ in the above splitting must be compared with the role of the linear combination $2H_0+\sum_{i=1}^d H_i$ in the study of the fractional heat equation (see e.g. \cite[Theorem 1.2] {deya}). These combinations naturally echo the hyperbolic and parabolic settings, with scaling coefficient $\mathfrak{s}=(1,1,\ldots,1)$ and $\mathfrak{s}=(2,1,\ldots,1)$, respectively.
\end{remark}

\smallskip

\begin{remark}
Of course, the two situations $(i)$ and $(ii)$ in Theorem \ref{main-theo} and Theorem \ref{main-theo-lim} do not cover the whole range of possibilities for the Hurst index $H=(H_0,\ldots,H_d)\in (0,1)^{d+1}$ of the noise. 

The restricting condition $\sum_{i=0}^d H_i >d-\frac34$ is first inherited from our computations towards Proposition \ref{prop:regu-psi-order-two} (as reported in (\ref{constraint-h-i})), where we lean on the possibility to pick $\al <\frac14$ (see (\ref{importance-alpha-1-4})), and, due to (\ref{regu-2}), this can indeed be done only if $\sum_{i=0}^d H_i >d-\frac34$. We suspect that, at the price of a sophisticated refinement of the estimations of Section \ref{subsec:proof-conv-order-two} (starting from a refinement of the transition from (\ref{i-n-s-t-0}) to (\ref{i-n-s-t})), the renormalization result of Proposition \ref{prop:regu-psi-order-two} should in fact remain true up to the critical value $\sum_{i=0}^d H_i =d-1$. This conjecture is essentially based on the results of \cite{deya-wave-2} for the particular dimension $d=2$, where the Wick-renormalization of $\Psi^2$ is shown to be possible up to the critical situation $H_0+H_1+H_2=1$ (see \cite[Propositions 1.3 and 1.4]{deya-wave-2}).

On the other hand, and in light of the assumptions in the subsequent Proposition \ref{prop:deterministic-result-d}, it is clear that the deterministic part of our analysis can only be applied if $\al < \frac13$. Keeping condition (\ref{regu-2}) in mind, this would here lead to the restriction $\sum_{i=0}^d H_i >d-\frac56$, and so, in brief, we think that the \enquote{second-order} results of Theorems \ref{main-theo} and \ref{main-theo-lim} should remain valid if $d-\frac56 <\sum_{i=0}^d H_i \leq d-\frac34$. To our opinion, extending such properties to the case $d-1 <\sum_{i=0}^d H_i \leq d-\frac56$ can only be done through the consideration of higher-order expansions of the equation, as performed in \cite{deya-wave-2} for the particular dimension $d=2$.
\end{remark}

\smallskip

\begin{remark}
The forthcoming proofs (and accordingly the above results) could certainly be extended to more general covariance structures, such as the ones considered for instance in \cite{balan-tudor}. Our arguments are indeed based on a Fourier-type analysis, which suggests that a suitable control on the Fourier transform of the covariance function might be sufficient for the computations to remain valid. Besides, we think that, just as in rough paths or regularity structures results, the above properties are in fact relatively independent of the choice of the approximation $B_n$. For instance, using an appropriate Fourier transformation, the results should be the same when starting from an approximation of the form $B_n\triangleq \varphi_n \ast B$, for a given mollifying sequence $(\varphi_n)_{n\geq 1}$ (the only possible difference may be the value of the constants $c_H^1,c_H^2$ in (\ref{estim-cstt}), as classically observed in regularity structures theory).
\end{remark}

\smallskip

\begin{remark}
For $d=2$, our results cover the white-noise situation $H_0=H_1=H_2=\frac12$, and so we can consider Theorem \ref{main-theo} as a fractional extension of \cite[Theorem 1.1]{gubinelli-koch-oh} in the quadratic case (as far as the non-linearity). Our study thus offers an additional illustration of the flexibility of the general two-step procedure described above (i.e., we first study the free equation (\ref{equation-psi}) and then the auxiliary equation (\ref{equa-deter-base})). Observe that the white-noise situation for $d=2$ corresponds here to a \enquote{border case}, that is a case for which $\sum_{i=0}^d H_i=  d-\frac12$, with specific rate of divergence in (\ref{estim-cstt}). 
\end{remark}

\smallskip

\begin{remark}
As the reader may have guessed it, the involvement of the smooth function $\rho$ in (\ref{1-d-quadratic-wave}) is only meant to bring the computations back to a compact space-time domain (which will be often esssential in the sequel). Thus, our results should morally be read as local results, both in time and in space, for the real \enquote{target} equation, that is the equation with $\rho\equiv 1$. What refrained us to formulate the problem on a torus (just as in \cite{gubinelli-koch-oh,oh-thomann}) is the consideration of the fractional noise, which is more convenient to define and handle on the whole Euclidean space. 
\end{remark}

\

As we already pointed it out, our analysis will be clearly divided into a stochastic and a deterministic part. The organization of the paper will follow this splitting. Section \ref{sec:stochastic} is first devoted to the stochastic analysis, that is the study of $\Psi_n$ and the proof of Propositions \ref{prop:regu-psi} and \ref{prop:regu-psi-order-two}. The estimation (\ref{estim-cstt}) of the renormalization constant (which is directly related to $\Psi_n$) will also be carried out in this section. In Section \ref{sec:auxiliary-equation}, we will focus on the deterministic study of the auxiliary equation (\ref{equa-deter-base}), first in the \enquote{regular} case where $\mathbf{\Pi}^{\mathbf{1}}$ and $\mathbf{\Pi}^{\mathbf{1}}$ are functions (Proposition \ref{prop:deterministic-result-d-i}), and then in the distributional situation (Proposition \ref{prop:deterministic-result-d}). We will finally combine these successive results in Section \ref{sec:proofs-main-theos} in order to derive the proof of Theorem \ref{main-theo} and Theorem \ref{main-theo-lim}.

\

Throughout the paper, and for any normed space $E$, the notation $\cn[v;E]$ will refer to the norm of $v\in E$.

\

\textbf{Acknowledgements.} I am deeply grateful to two anonymous reviewers for their careful reading and their enthusiastic comments about this study.

\

\section{Study of the (stochastic) linear equation}\label{sec:stochastic}

We here propose to tackle the issues related to the solution $\Psi_n$ of the regularized equation \eqref{equation-psi-n}.

\smallskip

For a fixed dimension $d\geq 1$, let us denote by $G$ the Green function associated with the standard $d$-dimensional wave equation and recall that the (space) Fourier transform of $G$ is explicitly given for all $t\geq 0$ and $x\in \R^d$ by the formula 
$$\int_{\R^d} dy \,  e^{-\imath \langle x,y\rangle }G_{t}(y)=\frac{\sin(t |x|)}{|x|} \ .$$
Now, the solution $\Psi_n$ of (\ref{equation-psi-n}) can be written as  
\begin{eqnarray}
\Psi_n(t,x)&=&\int_0^t ds \, (G_{t-s} \ast \dot{B}_n(s))(x)\nonumber \\
&=& c \int_{|\xi|\leq 2^n}\int_{|\eta|\leq 2^n} \widehat{W}(d\xi,d\eta) \frac{\xi}{|\xi|^{H_0+\frac12}} \prod_{i=1}^d \frac{\eta_i}{|\eta_i|^{H_i+\frac12}}\int_0^t ds \int_{\R^d} dy \, G_{t-s}(x-y) e^{\imath \xi s} e^{\imath \langle \eta, y\rangle}\nonumber\\
&=& c \int_{|\xi|\leq 2^n}\int_{|\eta|\leq 2^n} \widehat{W}(d\xi,d\eta) \frac{\xi}{|\xi|^{H_0+\frac12}} \prod_{i=1}^d \frac{\eta_i e^{\imath \eta_i x_i}}{|\eta_i|^{H_i+\frac12}}\, \ga_t(\xi,|\eta|) \ , \label{expr-psi-n}
\end{eqnarray}
where for all $t\geq 0$, $\xi \in \R$ and $r>0$, we define the quantity $\ga_t(\xi,r)$ as
\begin{equation}\label{defi-gamma-t}
\ga_t(\xi,r)\triangleq e^{\imath \xi t}\int_0^t ds \, e^{-\imath \xi s} \frac{\sin(sr)}{r} \ .
\end{equation}
Let us also set $\ga_{s,t}(\xi,r)\triangleq \ga_t(\xi,r)-\ga_s(\xi,r)$.

\smallskip

With these notations in hand, our computations towards Proposition \ref{prop:regu-psi} and Proposition \ref{prop:regu-psi-order-two} will extensively rely on the two following elementary estimates.

\begin{lemma}
For all $0\leq s \leq t $, $\xi\in \R$, $r>0$ and $\ka,\la\in [0,1]$, it holds that 
\begin{equation}\label{estimate-gamma}
|\ga_{s,t}(\xi,r)| 
\lesssim  \min\Big( |\xi|^\ka |t-s|^\ka t^2+|t-s| t  ,\frac{|t-s|}{|\xi|}+\frac{|t-s|^\ka t}{|\xi|^{1-\ka}},\frac{ |t-s|^\ka \{r^\ka+|\xi|^\ka\} t^{\la(1-\ka)} }{r\, ||\xi|-r|^{1-\la (1-\ka)}} \Big) \ .
\end{equation}
\end{lemma}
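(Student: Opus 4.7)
The plan is to prove the three bounds inside the minimum separately, from three distinct representations of $\ga_t(\xi,r)$. Setting $\tilde{F}(t)\triangleq\int_0^t e^{-\imath\xi u}\sin(ur)/r\,du$, so that $\ga_t(\xi,r)=e^{\imath\xi t}\tilde{F}(t)$, I will use throughout the elementary splitting
\begin{equation*}
\ga_{s,t}(\xi,r)=e^{\imath\xi t}\bigl[\tilde{F}(t)-\tilde{F}(s)\bigr]+\bigl[e^{\imath\xi t}-e^{\imath\xi s}\bigr]\tilde{F}(s),
\end{equation*}
together with the H\"older interpolation $|e^{\imath\xi t}-e^{\imath\xi s}|\lesssim|\xi|^\ka(t-s)^\ka$ obtained between the trivial bound and the Lipschitz bound. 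For the first entry $1+|\xi|^\ka$ of the minimum, I will use the crude estimates $|\tilde{F}(t)-\tilde{F}(s)|\leq t-s$ and $|\tilde{F}(s)|\leq 1$, interpolated to H\"older form in $(t-s)$, and combine via the splitting. For the second entry $|\xi|^{-1}+|\xi|^{-(1-\ka)}$, I will first perform an integration by parts in $u$ to rewrite
\begin{equation*}
\tilde{F}(t)=\frac{\imath\,e^{-\imath\xi t}\sin(tr)}{\xi r}-\frac{\imath}{\xi}\int_0^t e^{-\imath\xi u}\cos(ur)\,du,
\end{equation*}
which extracts a factor $|\xi|^{-1}$ and replaces the $r$-sensitive kernel $\sin(ur)/r$ by $\cos(ur)$; combined with the Lipschitz bound $|\sin(tr)-\sin(sr)|\leq r(t-s)$ (which cancels the denominator $r$) and the H\"older interpolations above, this produces the desired estimate.

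For the third and most delicate entry, I will start from the closed-form expression
\begin{equation*}
-2r\,\ga_t(\xi,r)=\frac{e^{\imath tr}-e^{\imath\xi t}}{r-\xi}+\frac{e^{-\imath tr}-e^{\imath\xi t}}{r+\xi},
\end{equation*}
which follows from $\sin(ur)=(e^{\imath ur}-e^{-\imath ur})/(2\imath)$ and explicit integration. Taking the $(s,t)$ difference, bounding each numerator by $|e^{\imath tr}-e^{\imath sr}|+|e^{\imath\xi t}-e^{\imath\xi s}|\lesssim(r^\ka+|\xi|^\ka)(t-s)^\ka$, and using the elementary identity $\min(|r-\xi|,|r+\xi|)=||\xi|-r|$, I will obtain the endpoint (\enquote{$\la=0$}) bound $|\ga_{s,t}|\lesssim(t-s)^\ka(r^\ka+|\xi|^\ka)/(r\,||\xi|-r|)$. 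To recover the full $\la$-dependent form, I will establish a companion bound \emph{without} the resonant denominator by returning to the initial splitting with the refined controls $|\tilde{F}(t)-\tilde{F}(s)|\lesssim(t-s)^\ka r^{\ka-1}$ and $|\tilde{F}(s)|\lesssim 1/r$, which yield $|\ga_{s,t}|\lesssim(t-s)^\ka(r^\ka+|\xi|^\ka)/r$. A geometric interpolation $A^{1-\theta}B^\theta$ of the two companion bounds, with $\theta=1-\la(1-\ka)\in[0,1]$, then reproduces exactly the third entry of the minimum.

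The main technical delicacy lies in this last step: the geometric interpolation yields the exponent $1-\la(1-\ka)$ on $||\xi|-r|$ only because both companion bounds share the \emph{same} numerator $(r^\ka+|\xi|^\ka)$ and the \emph{same} factor $r^{-1}$. Any mismatch would leave behind a spurious power of $r$ or of $(r^\ka+|\xi|^\ka)$ after interpolation, so the refined estimates $|\tilde{F}(t)-\tilde{F}(s)|\lesssim(t-s)^\ka r^{\ka-1}$ and $|\tilde{F}(s)|\lesssim 1/r$ must be checked to hold uniformly over $r>0$ and $0\leq s\leq t\leq 1$; this is where the bookkeeping becomes most sensitive.
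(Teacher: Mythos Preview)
Your proof is correct and follows the same three-representation strategy as the paper's own argument: the crude splitting for the first entry, the integration-by-parts identity for the second, and the closed-form expression for the third. In fact you supply more detail than the paper on the third bound: the paper records the closed form and then simply asserts that the $\lambda$-dependent estimate ``easily'' follows, whereas your companion bound $|\ga_{s,t}|\lesssim (t-s)^\ka(r^\ka+|\xi|^\ka)/r$ together with the geometric interpolation (exponent $\theta=1-\lambda(1-\ka)$) makes that step explicit and clean.
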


\begin{proof}
First, one has obviously
\begin{eqnarray*}
|\ga_{s,t}(\xi,r)|&\lesssim & |e^{\imath \xi t}-e^{\imath \xi s}|\bigg| \int_0^t du \, e^{-\imath \xi u} \frac{\sin(u r)}{r}\bigg|+\bigg|\int_s^t du \, e^{-\imath \xi u} \frac{\sin(u r)}{r}\bigg|\\
& \lesssim & |\xi|^\ka |t-s|^\ka t^2 +|t-s|t  \ .
\end{eqnarray*}
Then observe that
$$\ga_t(\xi,r)=e^{\imath \xi t}\int_0^t ds \, e^{-\imath \xi s} \frac{\sin(s r)}{r}=-\frac{\sin(tr)}{\imath \xi r}+\frac{e^{\imath \xi t}}{\imath \xi} \int_0^t ds \, e^{-\imath \xi s} \cos(sr) \ ,$$
which readily entails $|\ga_{s,t}(\xi,r)|\lesssim \frac{|t-s|}{|\xi|}+\frac{|t-s|^\ka t}{|\xi|^{1-\ka}}$. Finally, it can be checked that
\begin{equation}\label{exact-expression-gamma}
\ga_t(\xi,r)=\frac{1}{2r} \bigg[ \frac{e^{\imath rt}-e^{\imath \xi t}}{\xi-r} -\frac{e^{-\imath rt}-e^{\imath \xi t}}{\xi+r}  \bigg] \ ,
\end{equation}
which easily leads to
$$|\ga_{s,t}(\xi,r)| \lesssim \frac{ |t-s|^\ka \{r^\ka+|\xi|^\ka\} t^{\la(1-\ka)} }{r\, ||\xi|-r|^{1-\la (1-\ka)}} \ .$$
\end{proof}

\begin{corollary}\label{coro:tech}
For all $0\leq s \leq t\leq 1$, $H\in (0,1)$, $r>0$, $\varepsilon \in (0,1)$ and $\ka \in [0,\min(H, \frac{1-\varepsilon}{2}))$, it holds that
$$\int_{\R} d\xi\,  \frac{|\ga_{s,t}(\xi,r)|^2}{|\xi|^{2H-1}} \lesssim |t-s|^{2\ka}\min\Big(1,\frac{1}{r^{2+2(H-\ka)}}+\frac{1}{r^{1+2(H-\ka)-\varepsilon}} \Big) \ .$$ 
\end{corollary}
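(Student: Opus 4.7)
My plan is to combine the three pointwise bounds of (\ref{estimate-gamma}) with a case-by-case decomposition of the integration variable $\xi$, handling first the constant bound $\lesssim |t-s|^{2\kappa}$ and then the decay in $r$.

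For the constant bound, I would split the integral at $|\xi|=1$. On $\{|\xi|\leq 1\}$ the first component of (\ref{estimate-gamma}) yields $|\ga_{s,t}|^2\lesssim |t-s|^{2\kappa}$, and the weight $|\xi|^{1-2H}$ is integrable at the origin since $2H<2$. On $\{|\xi|>1\}$ the second component gives $|\ga_{s,t}|^2/|\xi|^{2H-1}\lesssim |t-s|^{2\kappa}|\xi|^{-(1+2(H-\kappa))}$, which is integrable at infinity thanks to $\kappa<H$.

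For the decay in $r$ (focusing on $r\geq 1$, since the case $r\leq 1$ is absorbed into the constant bound), I would partition $\R=R_A\cup R_B\cup R_C$ with $R_A=\{|\xi|\leq r/2\}$, $R_B=\{r/2<|\xi|<2r\}$, and $R_C=\{|\xi|\geq 2r\}$. On the off-resonant regions $R_A$ and $R_C$, the quantity $||\xi|-r|$ is bounded below by a constant multiple of $\max(r,|\xi|)$, so the third component of (\ref{estimate-gamma}) with $\lambda=0$ yields a harmless (non-singular) estimate $|\ga_{s,t}|\lesssim |t-s|^\kappa r^\kappa/r^2$ on $R_A$ (and the analogous estimate with $|\xi|$ in place of $r$ on $R_C$); explicit integration against $|\xi|^{1-2H}$ then produces the first claimed term $|t-s|^{2\kappa}/r^{2+2(H-\kappa)}$ in each case.

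The main obstacle lies in the resonant region $R_B$, where the third estimate is singular at $|\xi|=r$. The idea is to pick $\lambda\in(\frac{1}{2(1-\kappa)},1)$, a choice admissible because the hypothesis $\kappa<(1-\varepsilon)/2<\frac12$ makes the lower endpoint strictly below $1$. For such $\lambda$, the kernel $||\xi|-r|^{-(1-\lambda(1-\kappa))}$ becomes square-integrable on the $r$-neighborhood of the resonance, and a direct computation of $\int_0^{r/2} u^{-(2-2\lambda(1-\kappa))}\,du\sim r^{2\lambda(1-\kappa)-1}$ delivers a contribution of the form $|t-s|^{2\kappa}/r^{1+2H-\varepsilon}$, with $\varepsilon$ quantifying the loss inherited from the critical integrability threshold at $\lambda=\frac{1}{2(1-\kappa)}$. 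The trade-off between integrability (demanding $\lambda$ away from that threshold) and $r$-decay (favoring $\lambda$ close to it) is precisely what fixes the admissible range of $\varepsilon$ via the constraint $\kappa<(1-\varepsilon)/2$. Summing the three regional contributions and taking the minimum with the constant bound then yields the claim.
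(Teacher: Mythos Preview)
Your proposal is correct and follows essentially the same route as the paper. The paper also splits the $\xi$-integral at $|\xi|=1$ for the uniform bound, and for the $r$-decay it uses the decomposition $\{||\xi|-r|\geq |\xi|/2\}\cup\{||\xi|-r|\leq |\xi|/2\}$ (i.e., $|\xi|\leq \tfrac{2}{3}r$ or $|\xi|\geq 2r$, versus $\tfrac{2}{3}r\leq |\xi|\leq 2r$), which differs from your $R_A\cup R_C$ versus $R_B$ only by the inessential choice of the constant $\tfrac{2}{3}$ instead of $\tfrac12$; on the off-resonant piece the paper likewise takes $\lambda=0$, and on the resonant piece it makes the explicit choice $\lambda=\frac{1+\varepsilon}{2(1-\kappa)}$, which is exactly a point in your interval $(\frac{1}{2(1-\kappa)},1]$ and whose admissibility ($\lambda\leq 1$) is precisely the constraint $\kappa<\frac{1-\varepsilon}{2}$ you identify.
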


\begin{proof}
The two bounds follow from (\ref{estimate-gamma}). First,
\begin{align*}
&\int_{\R} d\xi\,  \frac{|\ga_{s,t}(\xi,r)|^2}{|\xi|^{2H-1}} \lesssim |t-s|^{2\ka}\bigg[ \int_{|\xi|\leq 1}  \frac{d\xi}{|\xi|^{2H-1}} +\int_{|\xi|\geq 1} \frac{d\xi}{|\xi|^{2(H-\ka)+1}}\bigg] \lesssim |t-s|^{2\ka} \ .
\end{align*}
Then consider the decomposition
$$\int_{\R} d\xi\,  \frac{|\ga_{s,t}(\xi,r)|^2}{|\xi|^{2H-1}}=\int_{||\xi|-r| \geq \frac{|\xi|}{2} } d\xi\,  \frac{|\ga_{s,t}(\xi,r)|^2}{|\xi|^{2H-1}}+\int_{||\xi|-r| \leq \frac{|\xi|}{2}} d\xi\,  \frac{|\ga_{s,t}(\xi,r)|^2}{|\xi|^{2H-1}} \ .$$
On the one hand, it holds that
\begin{eqnarray*}
\int_{||\xi|-r| \geq \frac{|\xi|}{2} } d\xi\,  \frac{|\ga_{s,t}(\xi,r)|^2}{|\xi|^{2H-1}}& \lesssim & \frac{|t-s|^{2\ka}}{r^2}\int_{\{ |\xi| \leq \frac23 r\}\cup\{|\xi|\geq 2r\} }d\xi \frac{r^{2\ka}+|\xi|^{2\ka}}{|\xi|^{2H-1}||\xi|-r|^2}\\
&\lesssim & \frac{|t-s|^{2\ka}}{r^{2+2(H-\ka)}}\int_{\{ |\xi| \leq \frac23 \}\cup\{|\xi|\geq 2\}  }d\xi \frac{1+|\xi|^{2\ka}}{|\xi|^{2H-1}||\xi|-1|^2} \ \lesssim \ \frac{|t-s|^{2\ka}}{r^{2+2(H-\ka)}} \ .
\end{eqnarray*}
On the other hand, for any $\la\in [0,1]$, one has
\begin{eqnarray*}
\int_{||\xi|-r| \leq \frac{|\xi|}{2}} d\xi\,  \frac{|\ga_{s,t}(\xi,r)|^2}{|\xi|^{2H-1}}& \lesssim &\frac{|t-s|^{2\ka}}{r^2}\int_{\frac23 r \leq |\xi| \leq 2r }d\xi \frac{r^{2\ka}+|\xi|^{2\ka}}{|\xi|^{2H-1}||\xi|-r|^{2-2\la(1-\ka)}}\\
&\lesssim& \frac{|t-s|^{2\ka}}{r^{2+2(H-\ka)-2\la(1-\ka)}}\int_{\frac23  \leq |\xi| \leq 2 } \frac{d\xi}{|\xi|^{2H-1}||\xi|-1|^{2-2\la(1-\ka)}} \ ,
\end{eqnarray*}
and we get the conclusion by taking $\la = \frac{1+\varepsilon}{2(1-\ka)} \in [0,1]$.
\end{proof}

\

\subsection{Proof of Proposition \ref{prop:regu-psi}}
For the sake of clarity, we shall assume that $T\leq 1$ and set, for all $m,n \geq 1$, $\Psi_{n,m}\triangleq \Psi_m-\Psi_n$.

\smallskip

\noindent
\textbf{Step 1:} \change{Let us show that for all $m\geq n \geq 1$, $0\leq s<t\leq 1$ and $\varepsilon >0$ small enough, one has
\begin{equation}\label{step-1-order-one}
\begin{split}
&\int_{\R^d} dx \, \mathbb{E} \Big[ \big|\mathcal{F}^{-1} \big( \{1+|.|^{2}\}^{-\frac{\al}{2}} \mathcal{F}(\change{\rho}[\Psi_{n,m}(t,.)-\Psi_{n,m}(s,.)])\big)(x)\big|^{2p}\Big] \\
&\hspace{3cm}\lesssim 2^{-2n\varepsilon p} |t-s|^{2\varepsilon p} \ ,
\end{split}
\end{equation}
where the proportional constant only depends on $\rho$, $\al$ and $p$.}

\smallskip

\noindent
\change{Using the hypercontractivity property of Gaussian variables, we can first assert that}
\begin{align*}
&\mathbb{E} \Big[ \big|\mathcal{F}^{-1} \big( \{1+|.|^{2}\}^{-\frac{\al}{2}} \mathcal{F}(\change{\rho}[\Psi_{n,m}(t,.)-\Psi_{n,m}(s,.)])\big)(x)\big|^{2p}\Big]\\
&\leq c_p \, \mathbb{E} \Big[ \big|\mathcal{F}^{-1} \big( \{1+|.|^{2}\}^{-\frac{\al}{2}} \mathcal{F}(\change{\rho}[\Psi_{n,m}(t,.)-\Psi_{n,m}(s,.)])\big)(x)\big|^{2}\Big]^p \, ,
\end{align*}
where the constant $c_p$ only depends on $p$. Then write
\small
\begin{eqnarray*}
\lefteqn{\mathbb{E} \Big[ \big|\mathcal{F}^{-1} \big( \{1+|.|^{2}\}^{-\frac{\al}{2}} \mathcal{F}(\change{\rho}[\Psi_{n,m}(t,.)-\Psi_{n,m}(s,.)])\big)(x)\big|^{2}\Big]}\\
 &=&\int_{\R^d} d\la\int_{\R^d} dy\int_{\R^d} d\tilde{\la}\int_{\R^d} d\tilde{y}   \, e^{\imath \langle x,\la\rangle} \{1+|\la|^2\}^{-\frac{\al}{2}}e^{-\imath \langle \la,y\rangle} e^{-\imath \langle x,\tilde{\la}\rangle} \{1+|\tilde{\la}|^2\}^{-\frac{\al}{2}}\\
& &\hspace{1cm} e^{\imath \langle \tilde{\la},\tilde{y}\rangle}\change{\rho(y)\rho(\tilde{y})}\mathbb{E}\big[  \{\Psi_{n,m}(t,y)-\Psi_{n,m}(s,y)\}\overline{\{\Psi_{n,m}(t,\tilde{y})-\Psi_{n,m}(s,\tilde{y})\}}\big] \ .
\end{eqnarray*}
\normalsize
At this point, and with expression (\ref{expr-psi-n}) in mind, note that
\begin{align}
&\mathbb{E}\big[  \{\Psi_{n,m}(t,y)-\Psi_{n,m}(s,y)\}\overline{\{\Psi_{n,m}(t,\tilde{y})-\Psi_{n,m}(s,\tilde{y})\}}\big]\nonumber\\
&=c \int_{(\xi,\eta)\in \cd_{m,n}} d\xi d\eta \,  \frac{1}{|\xi|^{2H_0-1}} \prod_{i=1}^d \frac{1}{|\eta_i|^{2H_i-1}} |\ga_{s,t}(\xi,|\eta|)|^2 e^{\imath \langle \eta,y\rangle} e^{-\imath \langle \eta,\tilde{y}\rangle} \ ,\label{cov}
\end{align}
where $\cd_{m,n}\triangleq(\cb^1_m \times \cb^d_m) \backslash (\cb_n^1 \times \cb_n^d)$, $\cb_\ell^k\triangleq\{\la \in \R^k, \, |\la|\leq 2^\ell\}$, and accordingly
\begin{align*}
&\mathbb{E} \Big[ \big|\mathcal{F}^{-1} \big( \{1+|.|^{2}\}^{-\frac{\al}{2}} \mathcal{F}(\change{\rho} [\Psi_{n,m}(t,.)-\Psi_{n,m}(s,.)])\big)(x)\big|^2\Big]\\
&=c\int_{(\xi,\eta)\in \cd_{m,n}}d\xi d\eta\,  \frac{1}{|\xi|^{2H_0-1}} \prod_{i=1}^d \frac{1}{|\eta_i|^{2H_i-1}}\, |\ga_{s,t}(\xi,|\eta|)|^2\\
&\hspace{1cm}\change{\int_{\R^d}d\la \int_{\R^d} d\tilde{\la}\, e^{\imath \langle x,\la-\tilde{\la}\rangle}\hat{\rho}(\la-\eta) \hat{\rho}(\eta-\tilde{\la})\{1+|\la|^2\}^{-\frac{\al}{2}} \{1+|\tilde{\la}|^2\}^{-\frac{\al}{2}}} \, ,
\end{align*}
\change{which gives
\begin{align}
&\mathbb{E} \Big[ \big|\mathcal{F}^{-1} \big( \{1+|.|^{2}\}^{-\frac{\al}{2}} \mathcal{F}(\change{\rho} [\Psi_{n,m}(t,.)-\Psi_{n,m}(s,.)])\big)(x)\big|^2\Big]^p\nonumber\\
&=c\prod_{j=1}^p\int_{(\xi^j,\eta^j)\in \cd_{m,n}}d\xi^j d\eta^j\,  \frac{1}{|\xi^j|^{2H_0-1}} \prod_{i=1}^d \frac{1}{|\eta^j_i|^{2H_i-1}}\, |\ga_{s,t}(\xi^j,|\eta^j|)|^2\nonumber\\
&\hspace{0.2cm}\int_{\R^d}d\la^j \int_{\R^d} d\tilde{\la}^j\, e^{\imath \langle x,\la^j-\tilde{\la}^j\rangle}\hat{\rho}(\la^j-\eta^j) \hat{\rho}(\eta^j-\tilde{\la}^j)\{1+|\la^j|^2\}^{-\frac{\al}{2}} \{1+|\tilde{\la}^j|^2\}^{-\frac{\al}{2}} \, .\label{correc-1}
\end{align}
Now,
\begin{align}
&\int_{\R^d} dx \, \prod_{j=1}^p \int_{\R^d}d\la^j \int_{\R^d} d\tilde{\la}^j\, e^{\imath \langle x,\la^j-\tilde{\la}^j\rangle}\hat{\rho}(\la^j-\eta^j) \hat{\rho}(\eta^j-\tilde{\la}^j)\{1+|\la^j|^2\}^{-\frac{\al}{2}} \{1+|\tilde{\la}^j|^2\}^{-\frac{\al}{2}}\nonumber\\
&=\int_{\R^d} dx \, \prod_{j=1}^p \int_{\R^d}d\la^j \int_{\R^d} d\tilde{\la}^j\, e^{\imath \langle x,\la^j-\tilde{\la}^j\rangle}\hat{\rho}(\la^j) \hat{\rho}(-\tilde{\la}^j)\{1+|\eta^j+\la^j|^2\}^{-\frac{\al}{2}} \{1+|\eta^j+\tilde{\la}^j|^2\}^{-\frac{\al}{2}}\nonumber\\
&=\prod_{j=1}^{p-1} \int_{\R^d}d\la^j \int_{\R^d} d\tilde{\la}^j\, \hat{\rho}(\la^j) \hat{\rho}(-\tilde{\la}^j)\{1+|\eta^j+\la^j|^2\}^{-\frac{\al}{2}} \{1+|\eta^j+\tilde{\la}^j|^2\}^{-\frac{\al}{2}}\nonumber\\
&\hspace{3cm}\int_{\R^d} d\la^p \, \hat{\rho}(\la^p)\hat{\rho}\Big(\sum\nolimits_{k=1}^{p-1}(\tilde{\la}^k-\la^k)+\la^p\Big)\{1+|\eta^p+\la^p|^2\}^{-\frac{\al}{2}}\nonumber\\
& \hspace{6cm}\Big\{1+\Big|\eta^p+\sum\nolimits_{k=1}^{p-1}(\tilde{\la}^k-\la^k)+\la^p\Big|^2\Big\}^{-\frac{\al}{2}} \, .\label{correc-2}
\end{align}
The absolute value of this product can in fact be bounded by
$$c\prod_{i=1}^p \{1+|\eta^j|^2\}^{-\al} \, ,$$
for some constant $c>0$, due to
\begin{align}
&\big| \hat{\rho}(\la)\{1+|\eta+\la|^2\}^{-\frac{\al}{2}} \big|\nonumber\\
&=\big| \hat{\rho}(\la)\{1+|\eta+\la|^2\}^{-\frac{\al}{2}} \big| \1_{\{|\la|>\frac12 |\eta|\}}+\big| \hat{\rho}(\la)\{1+|\eta+\la|^2\}^{-\frac{\al}{2}} \big| \1_{\{|\la|<\frac12 |\eta|\}}\nonumber\\
&\leq c_\al \Big[ |\hat{\rho}(\la)| \1_{\{|\la|>\frac12 |\eta|\}}+|\hat{\rho}(\la)|\{1+|\eta|^2\}^{-\frac{\al}{2}}  \1_{\{|\la|<\frac12 |\eta|\}}\Big]\nonumber\\
&\leq c_{\al,\rho,\ka}\Big[ \{1+|\la|^2\}^{-\ka} \{1+|\la|^2\}^{-\frac{\al}{2}} \1_{\{|\la|>\frac12 |\eta|\}}+\{1+|\la|^2\}^{-\ka}\{1+|\eta|^2\}^{-\frac{\al}{2}}  \1_{\{|\la|<\frac12 |\eta|\}}\Big]\nonumber\\
&\leq c_{\al,\rho,\ka} \{1+|\la|^2\}^{-\ka} \{1+|\eta|^2\}^{-\frac{\al}{2}} \, ,\label{correc-3}
\end{align}
for all $\la,\eta\in \R^d$ and $\ka >0$.}

\smallskip

\change{Going back to (\ref{correc-1}), we get that
\begin{align}
&\mathbb{E} \Big[ \big|\mathcal{F}^{-1} \big( \{1+|.|^{2}\}^{-\frac{\al}{2}} \mathcal{F}(\rho[\Psi_{n,m}(t,.)-\Psi_{n,m}(s,.)])\big)(x)\big|^2\Big]^p\nonumber\\
&\lesssim \bigg(\int_{(\xi,\eta)\in \cd_{m,n}}d\xi d\eta\,  \frac{1}{|\xi|^{2H_0-1}} \prod_{i=1}^d \frac{1}{|\eta_i|^{2H_i-1}}\{1+|\eta|^{2}\}^{-\al}\, |\ga_{s,t}(\xi,|\eta|)|^2\bigg)^p\nonumber\\
&\lesssim \bigg(\int_{2^n\leq |\xi|\leq 2^m}d\xi\int_{|\eta|\leq 2^m}d\eta\,  \frac{1}{|\xi|^{2H_0-1}} \prod_{i=1}^d \frac{1}{|\eta_i|^{2H_i-1}}\{1+|\eta|^{2}\}^{-\al}\, |\ga_{s,t}(\xi,|\eta|)|^2\bigg)^p\nonumber\\
&+\bigg(\int_{|\xi|\leq 2^m}d\xi\int_{2^n \leq |\eta|\leq 2^m}d\eta\,  \frac{1}{|\xi|^{2H_0-1}} \prod_{i=1}^d \frac{1}{|\eta_i|^{2H_i-1}}\{1+|\eta|^{2}\}^{-\al}\, |\ga_{s,t}(\xi,|\eta|)|^2\bigg)^p  \nonumber\\
& \triangleq \big( \text{I}_{m,n}(s,t)\big)^p+\big( \text{II}_{m,n}(s,t)\big)^p \ .\label{decompo-proof-ordre-un}
\end{align}}
Let us focus on the estimation of $\text{I}_{m,n}(s,t)$ (the treatment of $\text{II}_{m,n}(s,t)$ can be done along similar arguments). Using an elementary spherical change-of-variable for the $\eta_i$-coordinates, we get that for any $0<\varepsilon<H_0$,
\begin{align*}
&\text{I}_{m,n}(s,t) \leq 2^{-2n\varepsilon} \int_{\R}\frac{d\xi}{|\xi|^{2H_0-2\varepsilon-1}}\int_0^\infty dr \, \frac{\{1+r^2\}^{-\al}}{r^{2(H_1+\ldots+H_d)-2d+1}}\, |\ga_{s,t}(\xi,r)|^2  \\
&\quad \int_{[0,2\pi]^{d-1}} d\theta_1 \cdots d\theta_{d-1}\prod_{i=1}^{d-1}\frac{1}{|\cos(\theta_i)|^{2H_i-1}|\sin(\theta_i)|^{2(H_{i+1}+\ldots+H_d)-2d+2i+1}} \ ,
\end{align*}
and since $\max(2H_i-1,2(H_{i+1}+\ldots+H_d)-2d+2i+1)<1$ for every $i\in \{1,\ldots,d-1\}$, this yields
$$\text{I}_{m,n}(s,t) \lesssim 2^{-2n\varepsilon}\int_{\R}d\xi \int_0^\infty dr\, \frac{1}{|\xi|^{2H_0-2\varepsilon-1}}\, \frac{\{1+r^2\}^{-\al}}{r^{2(H_1+\ldots+H_d)-2d+1}}\, |\ga_{s,t}(\xi,r)|^2 \ .$$
Now, by applying Corollary \ref{coro:tech}, we can assert that for all $0<\varepsilon <\min(H_0,\frac12)$ and $0<\ka<\min(H_0-\varepsilon,\frac12-\varepsilon)$,
\begin{align}
&\int_{\R}d\xi \int_0^\infty dr\, \frac{1}{|\xi|^{2H_0-2\varepsilon-1}}\, \frac{\{1+r^2\}^{-\al}}{r^{2(H_1+\ldots+H_d)-2d+1}}\, |\ga_{s,t}(\xi,r)|^2 \nonumber \\
& \lesssim  |t-s|^{2\ka}\bigg[\int_0^1 \frac{dr}{r^{2(H_1+\ldots+H_d)-2d+1}}+\int_1^\infty dr\, \frac{1}{r^{2\al+2(H_0+\ldots+H_d)-2d+2-2\ka-4\varepsilon}} \bigg]\label{correct-brack} \ .
\end{align}
The conclusion is straightforward: the two integrals involved in (\ref{correct-brack}) are indeed finite as soon as 
$$2\varepsilon+\ka < \al- \Big[ d-\frac12-\sum_{i=0}^d H_i\Big] \ .$$

\

\noindent
\textbf{Step 2:} \change{We have thus shown that
$$\mathbb{E} \Big[ \big\|\rho \Psi_{n,m}(t,.)-\rho\Psi_{n,m}(s,.) \big\|_{\cw^{-\al,2p}(\change{\R^d})}^{2p} \Big]\lesssim 2^{-2n\varepsilon p} |t-s|^{2\varepsilon p}\ ,$$}
and we can now conclude by applying the classical Garsia-Rodemich-Rumsey estimate: for any $\varepsilon_0>0$,
\begin{eqnarray*}
\lefteqn{\mathbb{E} \Big[ \cn \big[\change{\rho}\Psi_{n,m};\cac^{\varepsilon_0}([0,T];\cw^{-\al,2p}(\change{\R^d}))\big]^{2p} \Big]}\\
 &\lesssim &\iint_{[0,1]^2} ds dt \frac{\mathbb{E} \Big[ \big\| \change{\rho}\Psi_{n,m}(t,.)-\change{\rho}\Psi_{n,m}(s,.) \big\|_{\cw^{-\al,2p}(\change{\R^d})}^{2p} \Big]}{|t-s|^{2\varepsilon_0 p+2}}\\
&\lesssim & 2^{-2n\varepsilon p}\iint_{[0,1]^2}\frac{ds dt}{|t-s|^{-2(\varepsilon-\varepsilon_0) p+2}} \ ,
\end{eqnarray*}
noting that the latter integral is finite for all $0<\varepsilon_0<\varepsilon$ and $p$ large enough.

\

\subsection{Proof of Proposition \ref{prop:regu-psi-order-two}}\label{subsec:proof-conv-order-two}

Due to condition \eqref{constraint-h-i}, we can (and will) assume in the sequel that $\al <\frac14$, which will be of importance in our estimates (see (\ref{importance-alpha-1-4})). Also, for the sake of clarity, we shall again assume that $T\leq 1$. Finally, let us set, for all $m,n \geq 1$ and $0\leq s,t\leq 1$, $\Psi_{n,m}\triangleq \Psi_m-\Psi_n$, $\widehat{\mathbf{\Psi}}^{\mathbf{2}}_{n,m}\triangleq \widehat{\mathbf{\Psi}}^{\mathbf{2}}_m-\widehat{\mathbf{\Psi}}^{\mathbf{2}}_n$ and $f(s,t;.)\triangleq f(t,.)-f(s,.)$ for $f\in \{\Psi_n,\Psi_{n,m},\widehat{\mathbf{\Psi}}^{\mathbf{2}}_{n},\widehat{\mathbf{\Psi}}^{\mathbf{2}}_{n,m}\}$.

\smallskip

Just as in \cite{gubinelli-koch-oh}, the success of the renormalization procedure essentially lies in the following elementary property, which can be readily derived from the classical Wick formula:
\begin{lemma}\label{lem:renorm}
For all $m,n\geq 1$, $s,t\geq 0$ and $y,\tilde{y}\in \R$, it holds that
\begin{equation*}
\mathbb{E}\big[\widehat{\mathbf{\Psi}}^{\mathbf{2}}_m(t,y)\ \overline{\widehat{\mathbf{\Psi}}^{\mathbf{2}}_n(s,\tilde{y})} \big]=2 \mathbb{E}\big[ \Psi_m(t,y) \overline{\Psi_n(s,\tilde{y})}\big]^2 \ .
\end{equation*}
\end{lemma}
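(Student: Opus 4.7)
The plan is to reduce the lemma to a direct application of Isserlis' (Wick's) formula for jointly Gaussian variables. First I would observe that although the integral representation (\ref{expr-psi-n}) is written in complex form, $\Psi_n(t,y)$ is in fact a real-valued centered Gaussian random variable (it is a mild solution of a classical wave equation driven by the real smooth noise $\dot B_n$), so the bar in the statement of the lemma may be dropped. In particular, $\Psi_m(t,y)$ and $\Psi_n(s,\tilde y)$ form a centered Gaussian pair.

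Next I would simply expand
\begin{align*}
\widehat{\mathbf{\Psi}}^{\mathbf{2}}_m(t,y)\,\widehat{\mathbf{\Psi}}^{\mathbf{2}}_n(s,\tilde{y}) &= \bigl(\Psi_m(t,y)^2-\si_m(t,y)\bigr)\bigl(\Psi_n(s,\tilde y)^2-\si_n(s,\tilde y)\bigr),
\end{align*}
and take expectations. Using $\si_m(t,y)=\mathbb{E}[\Psi_m(t,y)^2]$ and $\si_n(s,\tilde y)=\mathbb{E}[\Psi_n(s,\tilde y)^2]$, three of the four terms combine into $-\si_m(t,y)\si_n(s,\tilde y)$, and the remaining term is the four-point Gaussian moment $\mathbb{E}[\Psi_m(t,y)^2\Psi_n(s,\tilde y)^2]$, which by Isserlis' theorem equals
\begin{equation*}
\mathbb{E}[\Psi_m(t,y)^2]\,\mathbb{E}[\Psi_n(s,\tilde y)^2] + 2\,\mathbb{E}[\Psi_m(t,y)\Psi_n(s,\tilde y)]^2.
\end{equation*}
The first summand cancels the $-\si_m(t,y)\si_n(s,\tilde y)$ produced earlier, leaving exactly $2\,\mathbb{E}[\Psi_m(t,y)\Psi_n(s,\tilde y)]^2$, as claimed.

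There is no real obstacle here: the result is a purely algebraic identity that encodes the well-known fact that the Wick square $:\!\Psi^2\!:$ lives in the second Wiener chaos, and its covariance is the square of the covariance of the underlying Gaussian field. The only point that would deserve a line of justification is the reality of $\Psi_n(t,y)$, which follows from the Hermitian symmetry of $\widehat W$ (as the Fourier transform of a real space-time white noise) together with the corresponding symmetry of the integrand in (\ref{expr-psi-n}); alternatively one can just work with the complexified Wick formula, which produces the same final identity.
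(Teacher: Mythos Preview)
Your proposal is correct and is exactly the approach the paper has in mind: the paper does not spell out a proof but simply states that the identity ``can be readily derived from the classical Wick formula,'' and your expansion followed by Isserlis' theorem is precisely that derivation. The remark on the reality of $\Psi_n(t,y)$ is a sensible clarification and is consistent with the paper's setup.
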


\

We can now turn to the proof of Proposition \ref{prop:regu-psi-order-two}, that we present as a two-step procedure (just as the proof of Proposition \ref{prop:regu-psi}). 

\smallskip

\noindent
\textbf{Step 1:} Let us show that for all $m\geq n \geq 1$, $0\leq s\leq t\leq 1$ and $\varepsilon >0$ small enough, one has
\change{\begin{equation}\label{mom-ordre-deux}
\int_{\R^d} dx \, \mathbb{E} \Big[ \big|\mathcal{F}^{-1} \big( \{1+|.|^{2}\}^{-\al} \mathcal{F}(\rho^2\widehat{\mathbf{\Psi}}^{\mathbf{2}}_{n,m}(s,t;.))\big)(x)\big|^{2p}\Big] \lesssim 2^{-2n\varepsilon p}|t-s|^{2\varepsilon p} \ ,
\end{equation}
where the proportional constant only depends on $\rho$, $\al$ and $p$.}

\smallskip

\change{Using the hypercontractivity property of Wiener chaoses, we can first assert that
\begin{align*}
&\mathbb{E} \Big[ \big|\mathcal{F}^{-1} \big( \{1+|.|^{2}\}^{-\al} \mathcal{F}(\rho^2\widehat{\mathbf{\Psi}}^{\mathbf{2}}_{n,m}(s,t;.))\big)(x)\big|^{2p}\Big]\\
&\leq c_p \, \mathbb{E} \Big[ \big|\mathcal{F}^{-1} \big( \{1+|.|^{2}\}^{-\al} \mathcal{F}(\rho^2\widehat{\mathbf{\Psi}}^{\mathbf{2}}_{n,m}(s,t;.))\big)(x)\big|^{2}\Big]^p \, ,
\end{align*}
where the constant $c_p$ only depends on $p$.} Then write
\begin{eqnarray*}
\lefteqn{\mathbb{E} \Big[ \big|\mathcal{F}^{-1} \big( \{1+|.|^{2}\}^{-\al} \mathcal{F}(\rho^2\widehat{\mathbf{\Psi}}^{\mathbf{2}}_{n,m}(s,t;.))\big)(x)\big|^{2}\Big]}\\
   \\& &=\int_{\R^d} d\la\int_{\R^d} dy\int_{\R^d} d\tilde{\la}\int_{\R^d} d\tilde{y}\, e^{\imath \langle x,\la\rangle} \{1+|\la|^2\}^{-\al}e^{-\imath \langle \la,y\rangle} e^{-\imath \langle x,\tilde{\la}\rangle} \\
	& & \hspace{1.5cm}\{1+|\tilde{\la}|^2\}^{-\al}e^{\imath \langle \tilde{\la},\tilde{y}\rangle}\rho^2(y)\rho^2(\tilde{y})\mathbb{E}\big[  \widehat{\mathbf{\Psi}}^{\mathbf{2}}_{n,m}(s,t;y)\overline{\widehat{\mathbf{\Psi}}^{\mathbf{2}}_{n,m}(s,t;\tilde{y})}\big] \ ,
\end{eqnarray*}
and, using Lemma \ref{lem:renorm}, we can check that
\begin{eqnarray*}
\frac12\mathbb{E}\big[  \widehat{\mathbf{\Psi}}^{\mathbf{2}}_{n,m}(s,t;y)\overline{\widehat{\mathbf{\Psi}}^{\mathbf{2}}_{n,m}(s,t;\tilde{y})}\big]&=&\mathbb{E}\big[  \Psi_{n,m}(t,y)\overline{\Psi_{n,m}(s,t;\tilde{y})}\big] \mathbb{E}\big[  \Psi_m(t,y)\overline{\{\Psi_m+\Psi_n\}(t,\tilde{y})}\big]\\
& &+\mathbb{E}\big[  \Psi_{n,m}(t,y)\overline{\Psi_{n,m}(s,\tilde{y})}\big] \mathbb{E}\big[  \Psi_{m}(t,y)\overline{\{\Psi_m+\Psi_n\}(s,t;\tilde{y})}\big] \\
& &+\mathbb{E}\big[  \Psi_{n}(t,y)\overline{\Psi_{n,m}(s,t;\tilde{y})}\big] \mathbb{E}\big[  \Psi_{n,m}(t,y)\overline{\{\Psi_m+\Psi_n\}(t,\tilde{y})}\big]\\
& &+\mathbb{E}\big[  \Psi_{m}(t,y)\overline{\Psi_{n,m}(s,\tilde{y})}\big] \mathbb{E}\big[  \Psi_{n,m}(t,y)\overline{\{\Psi_m+\Psi_n\}(s,t;\tilde{y})}\big] \\
& & +\mathbb{E}\big[  \Psi_{n,m}(s,y)\overline{\Psi_{n,m}(t,s;\tilde{y})}\big] \mathbb{E}\big[  \Psi_m(s,y)\overline{\{\Psi_m+\Psi_n\}(s,\tilde{y})}\big]\\
& &+\mathbb{E}\big[  \Psi_{n,m}(s,y)\overline{\Psi_{n,m}(t,\tilde{y})}\big] \mathbb{E}\big[  \Psi_{m}(s,y)\overline{\{\Psi_m+\Psi_n\}(t,s;\tilde{y})}\big] \\
& &+\mathbb{E}\big[  \Psi_{n}(s,y)\overline{\Psi_{n,m}(t,s;\tilde{y})}\big] \mathbb{E}\big[  \Psi_{n,m}(s,y)\overline{\{\Psi_m+\Psi_n\}(s,\tilde{y})}\big]\\
& &+\mathbb{E}\big[  \Psi_{m}(s,y)\overline{\Psi_{n,m}(t,\tilde{y})}\big] \mathbb{E}\big[  \Psi_{n,m}(s,y)\overline{\{\Psi_m+\Psi_n\}(t,s;\tilde{y})}\big] \\
& \triangleq &  \sum\nolimits_{i=1,\ldots,8} \text{A}^i_{n,m}(s,t;y,\tilde{y}) \ .
\end{eqnarray*}
It turns out that the eight terms derived from $\text{A}^i_{m,n}(s,t;y,\tilde{y})$ ($i\in \{1,\ldots,8\}$) can be handled with the same arguments, and therefore we will only focus on the treatment of $\text{A}^1_{m,n}(s,t;y,\tilde{y})$. In fact, just as in (\ref{cov}), one has
\begin{align*}
&\mathbb{E}\big[  \Psi_{n,m}(t,y)\overline{\Psi_{n,m}(s,t;\tilde{y})}\big]=c \int_{(\xi,\eta)\in \cd_{m,n}} d\xi d\eta \,  \frac{1}{|\xi|^{2H_0-1}} \prod_{i=1}^d \frac{1}{|\eta_i|^{2H_i-1}}  e^{\imath \langle \eta,y\rangle} e^{-\imath \langle \eta,\tilde{y}\rangle}\ga_t(\xi,|\eta|) \overline{\ga_{s,t}(\xi,|\eta|)}
\end{align*}
and
\begin{align*}
&\mathbb{E}\big[  \Psi_m(t,y)\overline{\{\Psi_m+\Psi_n\}(t,\tilde{y})}\big]=c \int_{\R \times \R^d } d\xiti d\etati \,  \frac{1}{|\xiti|^{2H_0-1}} \prod_{i=1}^d \frac{1}{|\etati_i|^{2H_i-1}} |\ga_t(\xiti,|\etati|)|^2 e^{\imath \langle \etati,y\rangle} e^{-\imath \langle \etati,\tilde{y}\rangle}\\
&\hspace{9cm} \big\{ \mathbf{1}_{(\xiti,\etati)\in \cb_m^1 \times \cb_m^d}+ \mathbf{1}_{(\xiti,\etati)\in \cb_n^1 \times \cb_n^d} \big\} \ ,
\end{align*}
where $\cd_{m,n}\triangleq(\cb^1_m \times \cb^d_m) \backslash (\cb_n^1 \times \cb_n^d)$ and $\cb_\ell^k\triangleq\{\la \in \R^k, \, |\la|\leq 2^\ell\}$. \change{Besides, one has obviously
\begin{align*}
&\int_{\R^d} d\la\int_{\R^d} dy\int_{\R^d} d\tilde{\la}\int_{\R^d} d\tilde{y}   \, e^{\imath \langle x,\la\rangle} \{1+|\la|^2\}^{-\al}e^{-\imath \langle \la,y\rangle} e^{-\imath \langle x,\tilde{\la}\rangle}\\
&\hspace{2cm}\{1+|\tilde{\la}|^2\}^{-\al}e^{\imath \langle \tilde{\la},\tilde{y}\rangle} \rho^2(y) \rho^2(\tilde{y})e^{\imath \langle \eta,y\rangle} e^{-\imath \langle \eta,\tilde{y}\rangle}e^{\imath \langle \etati,y\rangle} e^{-\imath \langle \etati,\tilde{y}\rangle}\\
&=\int_{\R^d} d\la\int_{\R^d} d\tilde{\la}\, e^{\imath \langle x, \la-\tilde{\la}\rangle} \{1+|\la|^2\}^{-\al}\{1+|\tilde{\la}|^2\}^{-\al} \cf\big(\rho^2\big)(\la-\eta-\etati)\cf\big(\rho^2\big)(\eta+\etati-\tilde{\la}) \, .
\end{align*}
Using the same arguments as in the proof of Proposition \ref{prop:regu-psi} (see (\ref{correc-1})-(\ref{correc-3})), we end up with}
\begin{align*}
&\int_{\R^d} dx \, \bigg|\int_{\R^d} d\la\int_{\R^d} dy\int_{\R^d} d\tilde{\la}\int_{\R^d} d\tilde{y}   \, e^{\imath \langle x,\la\rangle} \{1+|\la|^2\}^{-\al}e^{-\imath \langle \la,y\rangle} e^{-\imath \langle x,\tilde{\la}\rangle}\\
&\hspace{6cm} \{1+|\tilde{\la}|^2\}^{-\al}e^{\imath \langle \tilde{\la},\tilde{y}\rangle} \text{A}^1_{m,n}(s,t;y,\tilde{y})\bigg|^p\\
&\lesssim\bigg( \int_{(\xi,\eta)\in \cd_{m,n}} d\xi d\eta \,  \frac{1}{|\xi|^{2H_0-1}} \prod_{i=1}^d \frac{1}{|\eta_i|^{2H_i-1}} \big| \ga_t(\xi,|\eta|)\big| \big| \ga_{s,t}(\xi,|\eta|)\big| \\
& \hspace{3.5cm} \int_{\R \times \R^d } d\xiti d\etati \,  \frac{1}{|\xiti|^{2H_0-1}} \prod_{i=1}^d \frac{1}{|\etati_i|^{2H_i-1}} |\ga_t(\xiti,|\etati|)|^2\\
& \hspace{4cm} \big\{ \mathbf{1}_{(\xiti,\etati)\in \cb_m^1 \times \cb_m^d}+ \mathbf{1}_{(\xiti,\etati)\in \cb_n^1 \times \cb_n^d} \big\} \{1+|\eta-\etati|^2\}^{-2\al}\bigg)^p\\
&\lesssim \bigg(\int_{|\xi|\geq 2^n} d\xi\int_{\R^d} d\eta \,  \frac{1}{|\xi|^{2H_0-1}} \prod_{i=1}^d \frac{1}{|\eta_i|^{2H_i-1}} |\ga_t(\xi,|\eta|)| |\ga_{s,t}(\xi,|\eta|)|\\
&\hspace{2cm} \int_{\R \times \R^d } d\xiti d\etati \,  \frac{1}{|\xiti|^{2H_0-1}} \prod_{i=1}^d \frac{1}{|\etati_i|^{2H_i-1}} |\ga_t(\xiti,|\etati|)|^2  \{1+|\eta-\etati|^2\}^{-2\al}\bigg)^p \\
&\ \ \ +\bigg(\int_{\R} d\xi \int_{|\eta| \geq 2^n} d\eta \,  \frac{1}{|\xi|^{2H_0-1}} \prod_{i=1}^d \frac{1}{|\eta_i|^{2H_i-1}} |\ga_t(\xi,|\eta|)| |\ga_{s,t}(\xi,|\eta|)|\\
&\hspace{2cm} \int_{\R \times \R^d } d\xiti d\etati \,  \frac{1}{|\xiti|^{2H_0-1}} \prod_{i=1}^d \frac{1}{|\etati_i|^{2H_i-1}} |\ga_t(\xiti,|\etati|)|^2  \{1+|\eta-\etati|^2\}^{-2\al}\bigg)^p \\
& \triangleq  \big(\text{I}_{n}(s,t)\big)^p+\big(\text{II}_{n}(s,t)\big)^p \ .
\end{align*}
As in the proof of Proposition \ref{prop:regu-psi}, we will restrict our attention to $\text{I}_{n}(s,t)$. For $0<\varepsilon <H_0$, one has 
\small
\begin{align}
\text{I}_n(s,t) \lesssim & \ 2^{-2n\varepsilon}\int_{\R \times \R^d} d\xi d\eta \int_{\R \times \R^d } d\xiti d\etati \, \{1+|\eta-\etati|^2\}^{-2\al}\nonumber\\
& \hspace{0.5cm}\frac{1}{|\xi|^{2(H_0-\varepsilon)-1}} \prod_{i=1}^d \frac{1}{|\eta_i|^{2H_i-1}} |\ga_t(\xi,|\eta|)| |\ga_{s,t}(\xi,|\eta|)|  \frac{1}{|\xiti|^{2H_0-1}} \prod_{i=1}^d \frac{1}{|\etati_i|^{2H_i-1}} |\ga_t(\xiti,|\etati|)|^2 \label{i-n-s-t-0} \\
\lesssim& \ 2^{-2n\varepsilon}\int_{\R \times \R^d} d\xi d\eta \int_{\R \times \R^d } d\xiti d\etati \, \{1+||\eta|-|\etati||^2\}^{-2\al} \nonumber \\
& \hspace{0.5cm}  \frac{1}{|\xi|^{2(H_0-\varepsilon)-1}} \prod_{i=1}^d \frac{1}{|\eta_i|^{2H_i-1}} |\ga_t(\xi,|\eta|)| |\ga_{s,t}(\xi,|\eta|)|  \frac{1}{|\xiti|^{2H_0-1}} \prod_{i=1}^d \frac{1}{|\etati_i|^{2H_i-1}} |\ga_t(\xiti,|\etati|)|^2 \ .\label{i-n-s-t}
\end{align}
\normalsize
Now let us split the integration domain as $(\R \times \R^d)^2 = D_1\cup D_2$, with
$$D_1\triangleq \{(\xi,\eta,\xiti,\etati): \  \frac{|\eta|}{2} < |\etati| < \frac{3 |\eta|}{2} \}$$
and
$$D_2\triangleq \{(\xi,\eta,\xiti,\etati): \  0<|\etati| < \frac{|\eta|}{2} \ \text{or} \ |\etati| > \frac{3|\eta|}{2}\}   \ . $$ 
For $(\xi,\eta,\xiti,\etati) \in D_2$, one has $||\eta|-|\etati|| > \max \big( \frac{|\eta|}{2} , \frac{|\etati|}{3}\big)$, and so
\begin{align}
&\int_{D_2} \frac{d\xi d\eta d\xiti d\etati}{\{1+||\eta|-|\etati||^2\}^{2\al}}\frac{1}{|\xi|^{2(H_0-\varepsilon)-1}} \prod_{i=1}^d \frac{1}{|\eta_i|^{2H_i-1}} |\ga_t(\xi,|\eta|)| |\ga_{s,t}(\xi,|\eta|)|\frac{1}{|\xiti|^{2H_0-1}} \prod_{i=1}^d \frac{1}{|\etati_i|^{2H_i-1}} |\ga_t(\xiti,|\etati|)|^2\nonumber\\
&\lesssim \bigg(\int_{\R \times \R^d} \frac{d\xi d\eta }{\{1+|\eta|^2\}^{\al}}\frac{1}{|\xi|^{2(H_0-\varepsilon)-1}} \prod_{i=1}^d \frac{1}{|\eta_i|^{2H_i-1}} |\ga_t(\xi,|\eta|)| |\ga_{s,t}(\xi,|\eta|)|\bigg)\nonumber\\
&\hspace{5cm} \bigg( \int_{\R \times \R^d} \frac{d\xiti d\etati }{\{1+|\etati|^2\}^{\al}} \frac{1}{|\xiti|^{2H_0-1}} \prod_{i=1}^d \frac{1}{|\etati_i|^{2H_i-1}} |\ga_t(\xiti,|\etati|)|^2 \bigg) \nonumber\\
&\lesssim \bigg(\int_{\R \times \R^d} \frac{d\xi d\eta }{\{1+|\eta|^2\}^{\al}}\frac{1}{|\xi|^{2(H_0-\varepsilon)-1}} \prod_{i=1}^d \frac{1}{|\eta_i|^{2H_i-1}} |\ga_t(\xi,|\eta|)|^2\bigg)^{1/2}\nonumber\\
&\hspace{2cm}\bigg(\int_{\R \times \R^d} \frac{d\xi d\eta }{\{1+|\eta|^2\}^{\al}}\frac{1}{|\xi|^{2(H_0-\varepsilon)-1}} \prod_{i=1}^d \frac{1}{|\eta_i|^{2H_i-1}} |\ga_{s,t}(\xi,|\eta|)|^2\bigg)^{1/2}\nonumber\\
&\hspace{5cm} \bigg( \int_{\R \times \R^d} \frac{d\xiti d\etati }{\{1+|\etati|^2\}^{\al}} \frac{1}{|\xiti|^{2H_0-1}} \prod_{i=1}^d \frac{1}{|\etati_i|^{2H_i-1}} |\ga_t(\xiti,|\etati|)|^2 \bigg) \ .\label{domain-d-1}
\end{align}
At this point, observe that we are exactly in the same position as in the proof of Proposition \ref{prop:regu-psi} (see (\ref{decompo-proof-ordre-un})), and so we can rely on the same arguments to assert that for $\varepsilon >0$ small enough, the above integral (over $D_2$) is indeed bounded by $c|t-s|^\varepsilon$, for some finite constant $c$.

\smallskip

\noindent
In order to deal with the integral over the domain $D_1$, observe first that
\begin{align*}
&\int_{\frac{|\eta|}{2} < |\etati| < \frac{3|\eta|}{2}} \frac{d\etati}{\{1+||\eta|-|\etati||^2\}^{2\al}}  |\ga_t(\xiti,|\etati|)|^2\prod_{i=1}^d \frac{1}{|\etati_i|^{2H_i-1}} \\
&=|\eta|^{-2(H_1+\ldots+H_d)+2d}  \int_{\frac{1}{2} < |\etati| < \frac32} \frac{d\etati}{\{1+|\eta|^2(1-|\etati|)^2\}^{2\al}}  |\ga_t(\xiti,|\eta| |\etati|)|^2\prod_{i=1}^d \frac{1}{|\etati_i|^{2H_i-1}}\\
&\lesssim |\eta|^{-2(H_1+\ldots+H_d)+2d} \int_{\frac12}^{\frac32} \frac{dr}{\{1+|\eta|^2(1-r)^2\}^{2\al}}  |\ga_t(\xiti,|\eta| r)|^2 \ ,
\end{align*}
and so 
\small
\begin{align*}
&\int_{D_1} \frac{d\xi d\eta d\xiti d\etati}{\{1+||\eta|-|\etati||^2\}^{2\al}}\frac{1}{|\xi|^{2(H_0-\varepsilon)-1}} \prod_{i=1}^d \frac{1}{|\eta_i|^{2H_i-1}} |\ga_t(\xi,|\eta|)| |\ga_{s,t}(\xi,|\eta|)| \frac{1}{|\xiti|^{2H_0-1}} \prod_{i=1}^d \frac{1}{|\etati_i|^{2H_i-1}} |\ga_t(\xiti,|\etati|)|^2\\
&\lesssim \int_{\R^d} \frac{d\eta}{|\eta|^{2(H_1+\ldots+H_d)-2d}}\prod_{i=1}^d \frac{1}{|\eta_i|^{2H_i-1}} \int_{\frac12}^{\frac32} \frac{dr}{\{1+|\eta|^2(1-r)^2\}^{2\al}}\\
&\hspace{7cm}\int_{\R} d\xi \, \frac{|\ga_t(\xi,|\eta|)| |\ga_{s,t}(\xi,|\eta|)|}{|\xi|^{2(H_0-\varepsilon)-1}} \int_{\R} d\xiti \, \frac{|\ga_t(\xiti,|\eta| r)|^2}{|\xiti|^{2H_0-1}}\\
&\lesssim \int_{\R^d} \frac{d\eta}{|\eta|^{2(H_1+\ldots+H_d)-2d}}\prod_{i=1}^d \frac{1}{|\eta_i|^{2H_i-1}} \int_{\frac12}^{\frac32} \frac{dr}{\{1+|\eta|^2(1-r)^2\}^{2\al}}\\
&\hspace{4cm}\bigg(\int_{\R} d\xi \, \frac{|\ga_t(\xi,|\eta|)|^2 }{|\xi|^{2(H_0-\varepsilon)-1}}\bigg)^{1/2}\bigg(\int_{\R} d\xi \, \frac{ |\ga_{s,t}(\xi,|\eta|)|^2}{|\xi|^{2(H_0-\varepsilon)-1}}\bigg)^{1/2} \int_{\R} d\xiti \, \frac{|\ga_t(\xiti,|\eta| r)|^2}{|\xiti|^{2H_0-1}}\\
&\lesssim \int_0^\infty \frac{d\rho}{\rho^{4(H_1+\ldots+H_d)-4d+1}} \int_{\frac12}^{\frac32} \frac{dr}{\{1+\rho^2(1-r)^2\}^{2\al}}\\
&\hspace{4cm}\bigg(\int_{\R} d\xi \, \frac{|\ga_t(\xi,\rho)|^2 }{|\xi|^{2(H_0-\varepsilon)-1}}\bigg)^{1/2}\bigg(\int_{\R} d\xi \, \frac{ |\ga_{s,t}(\xi,\rho)|^2}{|\xi|^{2(H_0-\varepsilon)-1}}\bigg)^{1/2} \int_{\R} d\xiti \, \frac{|\ga_t(\xiti,\rho  r)|^2}{|\xiti|^{2H_0-1}}\\
&\lesssim |t-s|^\ka \bigg[\int_0^1 \frac{d\rho}{\rho^{4(H_1+\ldots+H_d)-4d+1}}+\int_1^\infty \frac{d\rho}{\rho^{4(H_0+\ldots+H_d)-4d+3-8\varepsilon-\ka}}\int_{\frac12}^{\frac32}  \frac{dr}{\{1+\rho^2(1-r)^2\}^{2\al}}\bigg] \
\end{align*}
\normalsize
for all $0<\varepsilon < \min(H_0,\frac12)$ and $0<\ka< \min(H_0-\varepsilon,\frac12-\varepsilon)$, where we have used Corollary \ref{coro:tech} to derive the last inequality. Finally, since $\al <\frac14$, it is readily checked that for all $0<\varepsilon < \min(H_0,\frac12)$ and $0<\ka< \min(H_0-\varepsilon,\frac12-\varepsilon)$ such that $8\varepsilon +\ka < 4\Big(\al- \Big[ d-\frac12-\sum_{i=0}^d H_i\Big]\Big)$,
\begin{align}
&\int_1^\infty \frac{d\rho}{\rho^{4(H_0+\ldots+H_d)-4d+3-8\varepsilon-\ka}}\int_{\frac12}^{\frac32}  \frac{dr}{\{1+\rho^2(1-r)^2\}^{2\al}}\nonumber\\
&\leq \int_1^\infty \frac{d\rho}{\rho^{4\al+4(H_0+\ldots+H_d)-4d+3-8\varepsilon-\ka}}\int_{\frac12}^{\frac32}  \frac{dr}{(1-r)^{4\al}} \ < \ \infty \ . \label{importance-alpha-1-4}
\end{align}
Going back to (\ref{i-n-s-t}), we have thus shown (\ref{mom-ordre-deux}).

\

\noindent
\textbf{Step 2:} Once endowed with estimate (\ref{mom-ordre-deux}), \change{we can of course use the same arguments as in Step 2 of the proof of Proposition \ref{prop:regu-psi} to obtain that}, for $0<\varepsilon_0<\varepsilon$ and $p$ large enough,
$$\mathbb{E} \Big[ \cn \big[\widehat{\mathbf{\Psi}}^{\mathbf{2}}_{n,m};\cac^{\varepsilon_0}([0,T];\cw^{-2\al,2p}(D))\big]^{2p} \Big] \lesssim 2^{-n\varepsilon p} \ ,$$
which completes the proof our assertion.

\

\

\subsection{Estimation of the renormalization constant}

Let us conclude this section with the asymptotic analysis of the renormalization constant $\si_n(t) \ \triangleq \ \mathbb{E}\big[ \Psi_n(t,x)^2\big]$ at the core of the above renormalization procedure. In other words, our aim here is to show (\ref{estim-cstt}). To this end, fix $d\geq 2$ and $(H_0,\ldots,H_d)\in (0,1)^{d+1}$ such that 
$$
d-\frac34 < \sum_{i=0}^d H_i \leq  d-\frac12 \ ,
$$
and, with expression (\ref{expr-psi-n}) in mind, write the renormalization constant as
\begin{eqnarray*}
\si_n(t)  &=& \mathbb{E}\big[ \Psi_n(t,x)^2\big]\\
&=&c \int_{|\xi|\leq 2^n}  \frac{d\xi}{|\xi|^{2H_0-1}}\int_{|\eta|\leq 2^n} \prod_{i=1}^d \frac{d\eta_i}{|\eta_i|^{2H_i-1}} \, |\ga_t(\xi,|\eta|)|^2 \\
&=&c \int_0^{2^n} \frac{dr}{r^{2(H_1+\ldots+H_d)-2d+1}}\int_{|\xi|\leq 2^n}  \frac{d\xi}{|\xi|^{2H_0-1}} \, |\ga_{t}(\xi,r)|^2 \ .
\end{eqnarray*}
The asymptotic estimate (\ref{estim-cstt}) is now a straightforward consequence of the following technical result (take $\al \triangleq 2H_0 \in (0,2)$ and $\ka \triangleq 2(d-\frac12-\sum_{i=0}^d H_i ) \in [0,1)$):
\begin{proposition}
There exists a constant $c>0$ such that for all $\al\in (0,2)$ and $\ka\in [0,1)$, one has, as $n$ tends to infinity,
\begin{equation}\label{asymptotic-exp}
\int_0^{2^n} \frac{dr}{r^{-\al-\ka}} \int_{|\xi|\leq 2^n} \frac{d\xi}{|\xi|^{\al-1}} |\ga_t(\xi,r)|^2 =c\,  t \int_1^{2^n} \frac{dr}{r^{1-\ka}} +O(1) \ .
\end{equation}
\end{proposition}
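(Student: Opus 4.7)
The plan is to prove that for $r\geq 1$,
\begin{equation*}
I_n(r)\triangleq \int_{|\xi|\leq 2^n}\frac{d\xi}{|\xi|^{\al-1}}\,|\ga_t(\xi,r)|^2 = \frac{c\,t}{r^{\al+1}}+\widetilde{E}_n(r),
\end{equation*}
with $\int_1^{2^n} r^{\al+\ka}|\widetilde{E}_n(r)|\,dr = O(1)$ uniformly in $n$. The contribution of $r\in(0,1)$ to the left-hand side of (\ref{asymptotic-exp}) is then $O(1)$ thanks to the uniform bound $|\ga_t(\xi,r)|\lesssim\min(1,|\xi|^{-1})$ (coming from (\ref{estimate-gamma}) with $\ka=0$) and the integrability of $|\xi|^{1-\al}\min(1,|\xi|^{-2})$ over $\R$ for $\al\in(0,2)$, so only the range $r\in(1,2^n)$ will matter.

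To analyze $I_n(r)$ on $[1,2^n]$, I would use the exact expression (\ref{exact-expression-gamma}) to expand
\begin{equation*}
|\ga_t(\xi,r)|^2 = \frac{1}{r^2}\!\left(\frac{\sin^2((\xi-r)t/2)}{(\xi-r)^2}+\frac{\sin^2((\xi+r)t/2)}{(\xi+r)^2}-\frac{\cos(rt)\,(\cos(rt)-\cos(\xi t))}{\xi^2-r^2}\right).
\end{equation*}
For each of the two diagonal terms I would substitute $u=\xi\mp r$ and use that for $|u|\leq r/2$ one has $|u\pm r|^{1-\al}=r^{1-\al}(1+O(|u|/r))$, while $\sin^2(ut/2)/u^2$ decays like $1/u^2$ at infinity, so enlarging the $u$-domain to all of $\R$ costs only $O(r^{-\al-2})$. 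Combined with the identity $\int_{\R}\sin^2(ut/2)/u^2\,du=\pi t/2$, this produces
\begin{equation*}
\frac{1}{r^2}\int_\R \frac{d\xi}{|\xi|^{\al-1}}\!\left(\frac{\sin^2((\xi-r)t/2)}{(\xi-r)^2}+\frac{\sin^2((\xi+r)t/2)}{(\xi+r)^2}\right) = \frac{\pi\,t}{r^{\al+1}}+O\!\left(\frac{1+\log r}{r^{\al+2}}\right),
\end{equation*}
whose error contribution, after multiplication by $r^{\al+\ka}$ and integration over $[1,2^n]$, is $O(1)$ since $\int_1^\infty r^{\ka-2}(1+\log r)\,dr<\infty$ for $\ka<1$.

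For the cross term, the identity $\cos(rt)-\cos(\xi t)=2\sin((\xi+r)t/2)\sin((\xi-r)t/2)$ shows that the apparent singularities of $(\cos(rt)-\cos(\xi t))/(\xi^2-r^2)$ at $\xi=\pm r$ are removable, and a region-by-region estimate ($|\xi|\leq r/2$, $r/2\leq|\xi|\leq 2r$, $|\xi|\geq 2r$) gives a pointwise bound of order $r^{-\al-1}|\cos(rt)|$ for the corresponding contribution to $I_n(r)$. Once multiplied by $r^{\al+\ka}$ this is of the very same order as the main term, so one must extract the cancellation encoded in the oscillating prefactor $\cos(rt)$: an integration by parts in the $r$-variable (the antiderivative of $\cos(rt)$ is $\sin(rt)/t$, uniformly bounded, while the remaining slowly-varying factor $r^{\ka-1}\phi(r)$ has integrable derivative for $\ka<1$) then delivers the required $O(1)$ bound. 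Finally, the truncation $|\xi|\leq 2^n$ in $I_n(r)$ discards only a tail bounded by $r^{-2}\cdot 2^{-n\al}$ (using $|\ga_t(\xi,r)|\lesssim 1/(r|\xi|)$ for $|\xi|\geq 2r$), whose integral against $r^{\al+\ka}$ over $[1,2^n]$ is $O(1)$ by separate estimates according to whether $\al+\ka<1$, $\al+\ka=1$ or $\al+\ka>1$.

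The principal difficulty is precisely this cross term: naive absolute-value estimates yield a contribution of the same order as the leading asymptotics, so cancellation from the oscillating prefactor $\cos(rt)$ must be invoked; integration by parts in $r$ is the natural tool, but care is required to control the boundary and derivative terms uniformly in $n$ and in the parameters $\al,\ka$.
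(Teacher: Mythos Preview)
Your overall strategy matches the paper's: split off $r\in(0,1)$ as harmless, expand $|\ga_t(\xi,r)|^2$ via (\ref{exact-expression-gamma}) into two ``diagonal'' pieces $\sin^2((\xi\mp r)t/2)/(\xi\mp r)^2$ and a cross term, and extract the leading asymptotic $\pi t\,r^{-\al-1}$ from the diagonals. Your treatment of the diagonal terms is fine.

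The gap is in the cross term. Your region-by-region estimate for
\[
J(r)=\int_{|\xi|\leq 2^n}\frac{|\xi|^{1-\al}\bigl(\cos(rt)-\cos(\xi t)\bigr)}{\xi^2-r^2}\,d\xi
\]
gives only $|J(r)|=O(r^{1-\al})$, which indeed leaves a contribution of size $\int_1^{2^n}r^{\ka-1}\,dr$ --- the same as the main term. You then propose to recover $O(1)$ by integrating by parts in $r$, treating $\cos(rt)$ as the oscillatory factor and $r^{\ka-1}\phi(r)$ (with $\phi(r)=r^{\al-1}J(r)$) as ``slowly varying''. But $\phi$ is \emph{not} slowly varying: $J(r)$ itself contains $\cos(rt)$ through its numerator, so $\phi(r)=\cos(rt)\,r^{\al-1}K(r)-r^{\al-1}L(r)$ with $K(r)=\mathrm{PV}\!\int|\xi|^{1-\al}(\xi^2-r^2)^{-1}d\xi$. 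Differentiating $\phi$ produces genuinely oscillatory terms and delicate principal-value integrals (from $\partial_r(\xi^2-r^2)^{-1}$), so the claim ``$r^{\ka-1}\phi(r)$ has integrable derivative'' is unjustified as stated, and a single integration by parts does not close.

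The fix is simpler and is what the paper does: no oscillation argument is needed. Using the elementary bound $|\cos(rt)-\cos(\xi t)|\lesssim t^\varepsilon|\xi-r|^\varepsilon$ for small $\varepsilon>0$ (rather than the full Lipschitz bound) in the middle region $|\xi|\sim r$ improves your estimate to $|J(r)|\lesssim r^{-\al+\varepsilon}$, so the cross term contributes at most $r^{-\al-2+\varepsilon}$ to $I_n(r)$. After multiplying by $r^{\al+\ka}$ this gives $r^{\ka-2+\varepsilon}$, which is directly integrable over $[1,\infty)$ once $\varepsilon<1-\ka$. (The paper carries this out by splitting the $\xi$-range into $[0,r]$, $[r,2^n]$, $[-2^n,0]$ and using slightly different algebraic decompositions of $\Gamma_t$ in each range, but the mechanism --- gaining a power of $r$ from the H\"older bound on $\cos$ --- is exactly this.)
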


\begin{proof}
First, observe that using (\ref{estimate-gamma}), we have
\begin{align*}
&\bigg| \int_0^{1} \frac{dr}{r^{-\al-\ka}} \int_{|\xi|\leq 2^n} \frac{d\xi}{|\xi|^{\al-1}} |\ga_t(\xi,r)|^2 \bigg|\\
&  \lesssim  \{1+t^4\} \int_0^1 \frac{dr}{r^{-2H_0-\ka}} \int_{|\xi|\leq 1} \frac{d\xi}{|\xi|^{\al-1}}+t^2 \int_0^1 \frac{dr}{r^{-\al-\ka}} \int_{|\xi|\geq 1} \frac{d\xi}{|\xi|^{1+\al}} \ ,
\end{align*}
and accordingly it suffices to focus on the estimation of the integral
$$\int_1^{2^n} \frac{dr}{r^{-\al-\ka}} \int_{|\xi|\leq 2^n} \frac{d\xi}{|\xi|^{\al-1}} |\ga_t(\xi,r)|^2 \ .$$
To this end, we will rely on the following expansion, which can be readily derived from (\ref{exact-expression-gamma}):
\begin{eqnarray*}
|\ga_t(\xi,r)|^2&=&\frac{c}{r^2} \bigg\{ \frac{1-\cos(t(\xi-r))}{(\xi-r)^2} -\frac{\cos(tr)\{\cos(tr)-\cos(t\xi)\}}{(\xi-r)(\xi+r)} \bigg\}\\
& &+\frac{c}{r^2} \bigg\{ \frac{1-\cos(t(\xi+ r))}{(\xi+ r)^2} -\frac{\cos(tr)\{\cos(tr)-\cos(t\xi)\}}{(\xi-r)(\xi+r)} \bigg\}\\
&=:& \Gamma_t(\xi,r)+\widetilde{\Gamma}_t(\xi,r) \ .
\end{eqnarray*}
For obvious symmetry reasons, we have in fact
\begin{eqnarray}
\lefteqn{\int_1^{2^n} \frac{dr}{r^{-\al-\ka}} \int_{|\xi|\leq 2^n} \frac{d\xi}{|\xi|^{\al-1}} |\ga_t(\xi,r)|^2}\nonumber\\
&= & 2\int_1^{2^n} \frac{dr}{r^{-\al-\ka}} \int_{|\xi|\leq 2^n} \frac{d\xi}{|\xi|^{\al-1}} \Gamma_t(\xi,r)\nonumber \\
&=&2\int_1^{2^n} \frac{dr}{r^{-\al-\ka}} \int_0^r \frac{d\xi}{|\xi|^{\al-1}} \Gamma_t(\xi,r)+2\int_1^{2^n} \frac{dr}{r^{-\al-\ka}} \int_r^{2^n} \frac{d\xi}{|\xi|^{\al-1}} \Gamma_t(\xi,r)\nonumber\\
& &\hspace{1cm}+2\int_1^{2^n} \frac{dr}{r^{-\al-\ka}} \int_0^{2^n} \frac{d\xi}{|\xi|^{\al-1}} \Gamma_t(-\xi,r) \ =: \ \mathcal{J}^1_{n,t} +\mathcal{J}^2_{n,t}+\mathcal{J}^3_{n,t} \ . \label{decompo-mathcal-j}
\end{eqnarray}

\

\noindent
\textbf{Study of $\mathcal{J}^1_{n,t}$.} Let us introduce the additional notation
$$\Gamma^1_t(\xi,r):=  \frac{1-\cos(t(\xi-r))}{r^2(\xi-r)^2}  \quad , \quad \Gamma^2_t(\xi,r):=\frac{\cos(tr)\{\cos(t\xi)-\cos(tr)\}}{r^2(\xi-r)(\xi+r)} \ ,$$
so that
\begin{equation}\label{decompo-gamma-1}
\Gamma_t(\xi,r)=c\big\{\Gamma^1_t(\xi,r)+\Gamma^2_t(\xi,r)\big\} \ .
\end{equation}
Now on the one hand, for any $0<\varepsilon <1$,
\begin{eqnarray*}
\lefteqn{\bigg| \int_1^{2^n} \frac{dr}{r^{-\al-\ka}} \int_0^r \frac{d\xi}{|\xi|^{\al-1}} \Gamma^2_t(\xi,r)\bigg|}\\
&=& \bigg| \int_1^{2^n} \frac{dr}{r^{2-\ka}} \int_0^1 \frac{d\xi}{|\xi|^{\al-1}} \frac{\cos(tr)(\cos(tr\xi)-\cos(tr))}{(\xi-1)(\xi+1)} \bigg|\\
&\lesssim& t^\varepsilon \int_1^\infty \frac{dr}{r^{2-\ka-\varepsilon}} \int_0^1 \frac{d\xi}{|\xi|^{\al-1} |1-\xi|^{1-\varepsilon} |1+\xi|}
\end{eqnarray*}
and the latter integrals are finite for any $\varepsilon >0$ such that $\ka+\varepsilon <1$, which shows that
$$\int_1^{2^n} \frac{dr}{r^{-\al-\ka}} \int_0^r \frac{d\xi}{|\xi|^{\al-1}} \Gamma^2_t(\xi,r)=O(1) \ .$$
On the other hand,
\begin{eqnarray*}
\int_1^{2^n} \frac{dr}{r^{-\al-\ka}} \int_0^r \frac{d\xi}{|\xi|^{\al-1}} \Gamma^1_t(\xi,r)
&=& \int_1^{2^n} \frac{dr}{r^{2-\ka}} \int_0^1 \frac{d\xi}{|\xi|^{\al -1}} \frac{1-\cos(tr(1-\xi))}{(1-\xi)^2} \\
&=& \int_1^{2^n} \frac{dr}{r^{2-\ka}} \int_0^{\frac12} \frac{d\xi}{|1-\xi|^{\al -1}} \frac{1-\cos(tr\xi)}{\xi^2}\\
& &\hspace{2cm}+\int_1^{2^n} \frac{dr}{r^{2-\ka}} \int_{\frac12}^1 \frac{d\xi}{|1-\xi|^{\al -1}} \frac{1-\cos(tr\xi)}{\xi^2} \ ,
\end{eqnarray*}
with
$$\bigg| \int_1^{2^n} \frac{dr}{r^{2-\ka}} \int_{\frac12}^1 \frac{d\xi}{|1-\xi|^{\al -1}} \frac{1-\cos(tr\xi)}{\xi^2} \bigg| \lesssim \int_1^\infty \frac{dr}{r^{2-\ka}} \int_{\frac12}^1 \frac{d\xi}{|1-\xi|^{\al -1}}\, < \, \infty $$
and
\begin{eqnarray*}
\lefteqn{\int_1^{2^n} \frac{dr}{r^{2-\ka}} \int_0^{\frac12} \frac{d\xi}{|1-\xi|^{\al -1}} \frac{1-\cos(tr\xi)}{\xi^2}}\\
 &=& t \int_1^{2^n} \frac{dr}{r^{1-\ka}} \int_0^{\frac{rt}{2}} \frac{d\xi}{|1-\frac{\xi}{rt}|^{\al -1}} \frac{1-\cos(\xi)}{\xi^2}\\
&=&t \int_0^{\infty } d\xi  \frac{1-\cos(\xi)}{\xi^2}\int_1^{2^n} \frac{dr}{r^{1-\ka}}\\
& &+t \int_1^{2^n} \frac{dr}{r^{1-\ka}}\bigg[  \int_0^{\frac{rt}{2}} \frac{d\xi}{|1-\frac{\xi}{rt}|^{\al -1}} \frac{1-\cos(\xi)}{\xi^2}-\int_0^{\infty } d\xi  \frac{1-\cos(\xi)}{\xi^2}\bigg] \ .
\end{eqnarray*}
By applying the below technical Lemma \ref{lem:tech}, we can easily conclude that
\begin{equation}\label{expan-j-1}
\mathcal{J}^1_{n,t}=c \, t\int_1^{2^n} \frac{dr}{r^{1-\ka}}+O(1) \ .
\end{equation}

\

\noindent
\textbf{Study of $\mathcal{J}^2_{n,t}$.} We will here use the (readily-checked) decomposition
\begin{equation}\label{decompo-gamma-2}
\Gamma_t(\xi,r)=c\big\{2\Gamma^3_t(\xi,r)+\Gamma^4_t(\xi,r)\big\} 
\end{equation}
with
$$\Gamma^3_t(\xi,r):=\frac{1-\cos(t(\xi-r))}{r(\xi-r)^2(\xi+r)}$$
and
$$\Gamma^4_t(\xi,r):=\frac{1-\cos(t(\xi-r))-\cos(tr)(\cos(tr)-\cos(t\xi))}{r^2(\xi-r)(\xi+r)} \ .$$
Now observe on the one hand that for every $\varepsilon \in (0,1)$,
\begin{eqnarray*}
\bigg| \int_1^{2^n} \frac{dr}{r^{-\al-\ka}} \int_r^{2^n} \frac{d\xi}{|\xi|^{\al-1}} \Gamma^4_t(\xi,r) \bigg|
&\lesssim &t^\varepsilon \int_1^{2^n} \frac{dr}{r^{2-\al-\ka}} \int_r^\infty \frac{d\xi}{|\xi|^{\al-1} |\xi-r|^{1-\varepsilon} |\xi+r|}\\
& \lesssim &  t^\varepsilon \int_1^{\infty} \frac{dr}{r^{2-\ka-\varepsilon}} \int_1^\infty \frac{d\xi}{|\xi|^{\al-1} |1-\xi|^{1-\varepsilon} |1+\xi|}
\end{eqnarray*}
and the latter integrals are finite provided $0<\varepsilon <\min(1-\ka,\al)$. On the other hand,
\begin{eqnarray}
\lefteqn{\int_1^{2^n} \frac{dr}{r^{-\al-\ka}} \int_r^{2^n} \frac{d\xi}{|\xi|^{\al-1}} \Gamma^3_t(\xi,r)}\nonumber\\
& = & \int_1^{2^n} \frac{dr}{r^{1-\al-\ka}} \int_0^{2^n-r} \frac{d\xi}{|\xi+r|^{\al-1}|\xi+2r|} \frac{1-\cos(t\xi)}{\xi^2}\nonumber\\
&=& \frac{t}{2}\int_1^{2^n} \frac{dr}{r^{1-\ka}} \int_0^{\infty}d\xi \frac{1-\cos(\xi)}{\xi^2}+\int_1^{2^n} \frac{dr}{r^{1-\al-\ka}} \nonumber\\
& &\hspace{0.5cm}\bigg[\int_0^{2^n-r} \frac{d\xi}{|\xi+r|^{\al-1}|\xi+2r|} \frac{1-\cos(t\xi)}{\xi^2}-\frac{1}{2r^\al}\int_0^{\infty}d\xi \frac{1-\cos(t\xi)}{\xi^2} \bigg] \ .\label{interm-i-n-t-2}
\end{eqnarray}
Using Lemma \ref{lem:tech-2}, we can then assert that for $\varepsilon \in (0,\min(\al,1-\ka))$, 
\small
\begin{align*}
&\bigg| \int_1^{2^n} \frac{dr}{r^{1-\al-\ka}} \bigg[\int_0^{2^n-r} \frac{d\xi}{|\xi+r|^{\al-1}|\xi+2r|} \frac{1-\cos(t\xi)}{\xi^2}-\frac{1}{2r^\al}\int_0^{\infty}d\xi \frac{1-\cos(t\xi)}{\xi^2} \bigg] \bigg|\\
&\lesssim t^\varepsilon\int_1^{2^n} \frac{dr}{r^{2-\ka-\varepsilon}} +t^\varepsilon\int_1^{2^n} \frac{dr}{r^{1-\ka}|2^n-r|^{1-\varepsilon}}\\
& \lesssim  t^\varepsilon\int_1^{\infty} \frac{dr}{r^{2-\ka-\varepsilon}} +t^\varepsilon 2^{-n(1-\ka-\varepsilon)}\int_0^1 \frac{dr}{r^{1-\ka}|1-r|^{1-\varepsilon}} \ .
\end{align*}
\normalsize
Going back to (\ref{interm-i-n-t-2}), we have thus shown that
\begin{equation}\label{expan-j-2}
\mathcal{J}^2_{n,t}=c \, t\int_1^{2^n} \frac{dr}{r^{1-\ka}}+O(1) \ .
\end{equation}

\

\noindent
\textbf{Study of $\mathcal{J}^3_{n,t}$.} Using decomposition (\ref{decompo-gamma-1}), it is readily checked that for $\varepsilon \in (0,1)$ and for all $\xi,r>0$,
$$\big| \Gamma_t(-\xi,r)\big| \lesssim \frac{1}{r^2} \bigg[ \frac{1}{|\xi+r|^2} +\frac{t^\varepsilon}{|\xi+r| |\xi-r|^{1-\varepsilon}} \bigg] $$
and so
\begin{eqnarray*}
\bigg|\int_1^{2^n} \frac{dr}{r^{-\al-\ka}} \int_0^{2^n} \frac{d\xi}{|\xi|^{\al-1}} \Gamma_t(-\xi,r)\bigg| &\lesssim & \int_1^{2^n} \frac{dr}{r^{2-\al-\ka}} \int_0^{2^n} \frac{d\xi}{|\xi|^{\al-1}}  \bigg[\frac{1}{|\xi+r|^2} + \frac{t^\varepsilon}{|\xi+r| |\xi-r|^{1-\varepsilon}} \bigg]\\
&\lesssim & \int_1^{\infty} \frac{dr}{r^{2-\ka-\varepsilon}} \int_0^{\infty} \frac{d\xi}{|\xi|^{\al-1}}  \bigg[\frac{1}{|\xi+1|^2} + \frac{t^\varepsilon}{|\xi+1| |\xi-1|^{1-\varepsilon}} \bigg] \ .
\end{eqnarray*}
The latter integrals being finite as soon as $0<\varepsilon <\min(1-\ka,\al)$, this shows that $\mathcal{J}^3_{n,t}=O(1)$. Injecting this result, together with (\ref{expan-j-1}) and (\ref{expan-j-2}), into (\ref{decompo-mathcal-j}) yields the expected decomposition (\ref{asymptotic-exp}).
\end{proof}

\smallskip

\begin{lemma}\label{lem:tech}
Given $\al\in (0,2)$ and $\varepsilon \in (0,1)$, one has, for all $r>0$, 
$$\bigg| \int_0^{\frac{r}{2}} \frac{d\xi}{|1-\frac{\xi}{r}|^{\al -1}} \frac{1-\cos(\xi)}{\xi^2}-\int_0^{\infty } d\xi  \frac{1-\cos(\xi)}{\xi^2} \bigg| \leq c_{\al,\varepsilon} \bigg[\frac{1}{r}+\frac{1}{r^{1-\varepsilon}}\bigg] \ .$$
\end{lemma}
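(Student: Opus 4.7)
The natural approach is to split the quantity into a truncation error and a weight-perturbation error. Write
\[
\Delta(r)\triangleq \int_0^{r/2}\frac{d\xi}{|1-\xi/r|^{\al-1}}\frac{1-\cos(\xi)}{\xi^2}-\int_0^{\infty}\frac{1-\cos(\xi)}{\xi^2}\,d\xi= A(r)-B(r),
\]
where
\[
A(r)\triangleq \int_0^{r/2}\lp|1-\xi/r|^{1-\al}-1\rp \frac{1-\cos(\xi)}{\xi^2}\,d\xi,\qquad B(r)\triangleq \int_{r/2}^{\infty}\frac{1-\cos(\xi)}{\xi^2}\,d\xi.
\]

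The term $B(r)$ is the easy piece: using the trivial bound $1-\cos(\xi)\leq 2$ one gets $B(r)\leq 4/r$, which produces the $1/r$ contribution on the right-hand side. The bulk of the work lies in controlling $A(r)$. On the integration domain $\xi/r\in [0,1/2]$, so $1-\xi/r\in [1/2,1]$ stays uniformly away from the singularity of $t\mapsto t^{1-\al}$. Applying the mean value theorem to this $C^1$ function on $[1/2,1]$, whose derivative is bounded by a constant $c_\al$ depending only on $\al\in(0,2)$, yields the pointwise estimate $||1-\xi/r|^{1-\al}-1|\leq c_\al\,\xi/r$. Substituting this in gives
\[
|A(r)|\leq \frac{c_\al}{r}\int_0^{r/2}\frac{1-\cos(\xi)}{\xi}\,d\xi.
\]

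The remaining one-dimensional integral is logarithmic in $r$: splitting at $\xi=1$ and using $1-\cos(\xi)\leq \xi^2/2$ on $[0,1]$ and $1-\cos(\xi)\leq 2$ on $[1,\infty)$ produces a bound of the form $c(1+\log(r)_+)$, which for any fixed $\varepsilon\in(0,1)$ is dominated by $c_\varepsilon\,r^{\varepsilon}$. This yields $|A(r)|\leq c_{\al,\varepsilon}/r^{1-\varepsilon}$, matching the second term on the right-hand side.

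The only subtlety I anticipate is uniformity down to small $r$, where the sharp logarithmic estimate above is vacuous; but this regime is harmless since for $r\leq 1$ both the integral defining $\Delta(r)$ and the constant $\int_0^{\infty}(1-\cos(\xi))\xi^{-2}\,d\xi=\pi/2$ are uniformly bounded, so $|\Delta(r)|\leq c\leq c/r$ trivially. Combining the estimates for $A$ and $B$ (and this small-$r$ remark) yields the announced inequality.
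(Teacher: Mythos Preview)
Your proof is correct and follows essentially the same route as the paper: split the difference into the tail $\int_{r/2}^{\infty}(1-\cos\xi)\xi^{-2}\,d\xi$ and the weight-perturbation term $\int_0^{r/2}\big((1-\xi/r)^{1-\al}-1\big)(1-\cos\xi)\xi^{-2}\,d\xi$, control the latter via the mean value theorem to obtain the logarithmic bound $\lesssim r^{-1}(1+\log(r)_+)$, and absorb the logarithm into $r^{\varepsilon}$. The only cosmetic difference is that the paper assigns the $r^{-(1-\varepsilon)}$ decay to the tail and the $r^{-1}(1+|\log r|)$ decay to the perturbation term, whereas you do the reverse; both pairings are valid and your small-$r$ remark correctly patches the place where $1+\log(r)_+\leq c_{\varepsilon}r^{\varepsilon}$ fails.
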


\begin{proof}
Write the difference as
$$\int_0^{\frac{r}{2}}d\xi \bigg[ \frac{1}{|1-\frac{\xi}{r}|^{\al -1}}-1\bigg] \frac{1-\cos(\xi)}{\xi^2}+\int_{\frac{r}{2}}^{\infty } d\xi  \frac{1-\cos(\xi)}{\xi^2} \ .$$
Then observe that
$$\bigg| \int_{\frac{r}{2}}^{\infty } d\xi  \frac{1-\cos(\xi)}{\xi^2} \bigg| \lesssim \frac{1}{r^{1-\varepsilon}} \int_{0}^{\infty } d\xi  \frac{1-\cos(\xi)}{|\xi|^{1+\varepsilon}} $$
and
\begin{eqnarray*}
\bigg| \int_0^{\frac{r}{2}}d\xi \bigg[ \frac{1}{|1-\frac{\xi}{r}|^{\al -1}}-1\bigg] \frac{1-\cos(\xi)}{\xi^2} \bigg| &\lesssim & \int_0^{\frac{r}{2}}d\xi \frac{| |r|^{\al-1}-|r-\xi|^{\al-1}|}{|r-\xi|^{\al -1}}  \frac{1-\cos(\xi)}{\xi^2}\\
&\lesssim & \int_0^{\frac{r}{2}}d\xi   \frac{1-\cos(\xi)}{|r-\xi| |\xi|} \ \lesssim \ \frac{1+\lln \log r\rrn 1_{\{r>1\}}}{r} \ ,
\end{eqnarray*}
hence the conclusion.
\end{proof}

\smallskip

\begin{lemma}\label{lem:tech-2}
Given $\al\in (0,2)$ and $0<\varepsilon <\min(\al,1)$, one has, for all $t>0$, $n\geq 0$ and $1< r <2^n$,
$$
\bigg|\int_0^{2^n-r} \frac{d\xi}{|\xi+r|^{\al-1}|\xi+2r|} \frac{1-\cos(t\xi)}{\xi^2}-\frac{1}{2r^\al}\int_0^{\infty}d\xi \frac{1-\cos(t\xi)}{\xi^2}  \bigg| \leq c_{\al,\varepsilon}\bigg[\frac{t^\varepsilon}{r^{1+\al-\varepsilon}}+\frac{t^\varepsilon}{r^\al |2^n-r|^{1-\varepsilon}}\bigg] \ . 
$$
\end{lemma}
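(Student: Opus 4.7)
My plan is to split the quantity on the left-hand side into a \emph{bulk} piece, where we compare the weight $\frac{1}{|\xi+r|^{\al-1}|\xi+2r|}$ with its value at $\xi=0$, namely $\frac{1}{2r^\al}$, and a \emph{tail} piece collecting what the subtracted integral contributes beyond $2^n-r$. Concretely, I would write
\begin{align*}
&\int_0^{2^n-r} \frac{d\xi}{|\xi+r|^{\al-1}|\xi+2r|} \frac{1-\cos(t\xi)}{\xi^2} - \frac{1}{2r^\al}\int_0^{\infty} \frac{1-\cos(t\xi)}{\xi^2}\, d\xi \\
&= \int_0^{2^n-r}\bigg[\frac{1}{|\xi+r|^{\al-1}|\xi+2r|} - \frac{1}{2r^\al}\bigg] \frac{1-\cos(t\xi)}{\xi^2}\, d\xi - \frac{1}{2r^\al}\int_{2^n-r}^{\infty} \frac{1-\cos(t\xi)}{\xi^2}\, d\xi
\end{align*}
and estimate the two pieces separately, relying throughout on the interpolation bound $1-\cos(t\xi) \leq c_\varepsilon (t\xi)^\varepsilon$ (valid for any $\varepsilon\in(0,2)$) to produce the $t^\varepsilon$ factor appearing in the conclusion.

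The tail piece is immediate: $\int_{2^n-r}^\infty \xi^{\varepsilon-2}\, d\xi \lesssim |2^n-r|^{\varepsilon-1}$, and multiplying by the prefactor $\frac{1}{r^\al}$ yields the second term of the claimed bound.

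For the bulk piece, I would split the integration domain at $\xi=r$. On the inner region $[0,r]$, the function $h(\xi)\triangleq (\xi+r)^{\al-1}(\xi+2r)$ satisfies $h(0)=2r^\al$, and a direct computation of $h'$ shows $|h'(\xi)|\lesssim r^{\al-1}$ uniformly on $[0,r]$; the mean value theorem then yields $|h(\xi)-2r^\al|\lesssim \xi\, r^{\al-1}$, so
\begin{equation*}
\bigg|\frac{1}{|\xi+r|^{\al-1}|\xi+2r|} - \frac{1}{2r^\al}\bigg| = \frac{|h(\xi)-2r^\al|}{2r^\al\, h(\xi)} \lesssim \frac{\xi}{r^{\al+1}}.
\end{equation*}
Combined with $\frac{1-\cos(t\xi)}{\xi^2}\lesssim t^\varepsilon \xi^{\varepsilon-2}$, the contribution from $[0,r]$ is bounded by $\frac{t^\varepsilon}{r^{\al+1}}\int_0^r \xi^{\varepsilon-1}\, d\xi \lesssim \frac{t^\varepsilon}{r^{\al+1-\varepsilon}}$. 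On the outer region $[r,2^n-r]$ (empty unless $r\leq 2^{n-1}$, in which case no extra work is needed), I would estimate the two summands in the bracket separately: the bound $|\xi+r|^{\al-1}|\xi+2r| \gtrsim \xi^\al$ valid for $\xi\geq r$ gives $\int_r^\infty \xi^{\varepsilon-\al-2}\, d\xi \lesssim r^{\varepsilon-\al-1}$, while the $\frac{1}{2r^\al}$ summand produces $\frac{1}{r^\al}\int_r^\infty \xi^{\varepsilon-2}\, d\xi \lesssim \frac{1}{r^{\al+1-\varepsilon}}$ by the same interpolation. Collecting all pieces yields the first term of the claimed bound.

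I do not anticipate a serious obstacle here: the whole argument is a sequence of elementary estimates. The only mild subtlety lies in organising the splitting at $\xi=r$ so that the mean-value bound on $h$ is applied where it is valid (giving the sharp $\xi/r^{\al+1}$ difference), while on the complementary region one reverts to the direct size estimates. The choice to do a first-order expansion around $\xi=0$ is precisely what produces the extra factor of $\xi$ that can absorb one power of $\xi$ in $\frac{1-\cos(t\xi)}{\xi^2}$ and convert it into the needed $t^\varepsilon$ via the $\varepsilon$-interpolation.
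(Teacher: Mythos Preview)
Your argument is correct and follows essentially the same strategy as the paper: isolate the tail $\frac{1}{2r^\al}\int_{2^n-r}^\infty$, then control the bulk by comparing the weight $\frac{1}{(\xi+r)^{\al-1}(\xi+2r)}$ with its value $\frac{1}{2r^\al}$ at $\xi=0$ through a first-order expansion. The only organizational difference is that the paper telescopes the weight as
\[
\Big[\tfrac{1}{(\xi+r)^{\al-1}}-\tfrac{1}{r^{\al-1}}\Big]\tfrac{1}{\xi+2r}+\tfrac{1}{r^{\al-1}}\Big[\tfrac{1}{\xi+2r}-\tfrac{1}{2r}\Big]
\]
and bounds each increment globally on $(0,\infty)$, whereas you split the integration domain at $\xi=r$ and use the mean value theorem on the inner part; both routes produce the same $\xi/r^{\al+1}$ gain and the same final bound.
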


\begin{proof}
Let us decompose the difference as
\begin{align*}
&\int_0^{2^n-r} d\xi \bigg[ \frac{1}{|\xi+r|^{\al-1}}- \frac{1}{r^{\al-1}} \bigg] \frac{1-\cos(t\xi)}{|\xi+2r|\xi^2}\\
&\hspace{3cm}+\frac{1}{r^{\al-1}}\int_0^{2^n-r}d\xi \bigg[ \frac{1}{|\xi+2r|}- \frac{1}{2r} \bigg] \frac{1-\cos(t\xi)}{\xi^2} -\frac{1}{2r^\al} \int_{2^n-r}^\infty d\xi \, \frac{1-\cos(t\xi)}{\xi^2} \ .
\end{align*}
Then observe that
\begin{eqnarray*}
\bigg| \int_0^{2^n-r} d\xi \bigg[ \frac{1}{|\xi+r|^{\al-1}}- \frac{1}{r^{\al-1}} \bigg] \frac{1-\cos(t\xi)}{|\xi+2r|\xi^2} \bigg|
&\lesssim &\change{ \frac{1}{r^\al} \int_0^\infty \frac{d\xi}{|\xi+2r|} \frac{1-\cos(t\xi)}{|\xi|}} \\
&\lesssim &\change{\frac{1}{r^{1+\al-\varepsilon}} \int_0^\infty d\xi \frac{1-\cos(t\xi)}{|\xi|^{1+\varepsilon}} \, ,}
\end{eqnarray*}
and
\begin{eqnarray*}
\frac{1}{r^{\al-1}}\bigg| \int_0^{2^n-r}d\xi \bigg[ \frac{1}{|\xi+2r|}- \frac{1}{2r} \bigg] \frac{1-\cos(t\xi)}{\xi^2} \bigg|&=&\frac{1}{2r^{\al}}\bigg| \int_0^{2^n-r}d\xi  \frac{\xi}{|\xi+2r|} \frac{1-\cos(t\xi)}{\xi^2} \bigg|\\
&\lesssim & \frac{1}{r^{1+\al-\varepsilon}} \int_0^\infty d\xi \frac{1-\cos(t\xi)}{|\xi|^{1+\varepsilon}} \ .
\end{eqnarray*}
Finally, one has of course
$$\bigg| \int_{2^n-r}^{\infty } d\xi  \frac{1-\cos(t\xi)}{\xi^2} \bigg| \lesssim \frac{1}{|2^n-r|^{1-\varepsilon}} \int_{0}^{\infty } d\xi  \frac{1-\cos(t\xi)}{|\xi|^{1+\varepsilon}} \ .$$
\end{proof}

\section{Study of the (deterministic) auxiliary equation}\label{sec:auxiliary-equation}

Let us now turn to the analysis of the deterministic equation associated with our quadratic model (\ref{1-d-quadratic-wave}), that is the equation
\change{\begin{equation}\label{eq-deter-base-bis}
\left\{
\begin{array}{l}
\partial^2_t v -\Delta v+ \rho^2 v^2+(\rho v)\cdot \mathbf{\Pi}^{\mathbf{1}}+\mathbf{\Pi}^{\mathbf{2}}=0\, , \quad\quad  t\in [0,T] \, , \ x\in \R^d \, ,\\
v(0,.)=\phi_0 \ , \ \partial_t v(0,.)=\phi_1 \, , 
\end{array}
\right.
\end{equation}}
where $\mathbf{\Pi}^{\mathbf{1}},\mathbf{\Pi}^{\mathbf{2}}$ are two (fixed) elements living in appropriate Sobolev spaces. We are actually interested in the exhibition of a unique (local) mild solution to (\ref{eq-deter-base-bis}), which will be achieved by means of a standard fixed-point argument. In other words, for fixed $\mathbf{\Pi}\triangleq (\mathbf{\Pi}^{\mathbf{1}},\mathbf{\Pi}^{\mathbf{2}})$ and $T>0$, we will focus on the study of the map $\Gamma_{T,\mathbf{\Pi}}$ defined as
\begin{equation}\label{defi-gamma-phi-t}
\boxed{\Gamma_{T,\mathbf{\Pi}}(v)_t\triangleq \partial_t(G_t \ast_x \phi_0)+G_t\ast_x \phi_1 +(G \ast_{t,x} [\rho^2 v^2+ \mathbf{\Pi}^{\mathbf{1}} \cdot \rho v+\mathbf{\Pi}^{\mathbf{2}}])_t} \ ,
\end{equation}
where $G$ stands for the Green function of the standard $d$-dimensional wave equation. Putting the fixed components aside, this map is thus essentially built upon two successive operations: multiplication of $v$ with itself or with $\mathbf{\Pi}^{\mathbf{1}}$, and convolution with $G$. Accordingly, before we specify the space in which we will study $\Gamma_{T,\mathbf{\Pi}}$, let us recall a few general results on pointwise multiplication and convolution with the wave kernel.

\subsection{Pointwise multiplication}
Recall that, with the results of Section \ref{sec:stochastic} in mind, one of our purposes is to handle situations where the elements $\mathbf{\Pi}^{\mathbf{1}},\mathbf{\Pi}^{\mathbf{2}}$ involved in (\ref{defi-gamma-phi-t}) are not functions but only distributions. Thus, even if  we expect the solution $v$ itself to be a function, we will need to control the (non-standard) multiplication of a function with a distribution at some point of the procedure. To this end, we can rely on the following general statement, borrowed from \cite[Section 4.5.1]{runst-sickel}.


\begin{proposition}\label{product}
Fix $d\geq 1$. Let $\al,s >0$ and {$1 \leq p,p_1,p_2< \infty$} be such that 
$$
\frac{1}{p} = \frac{1}{p_{1}}+\frac{1}{p_{2}}\quad \text{and} \quad 0<\alpha<s.
$$
If $f\in \cw^{-\al,p_1}(\R^d)$ and $g\in \cw^{s,p_2}(\R^d)$, then $f\cdot g\in \cw^{-\al,p}(\R^d)$ and
$$
\| f\cdot g\|_{\cw^{-\al,p}} \lesssim \|f\|_{\cw^{-\al,p_1}} \| g\|_{\cw^{s,p_2}} \ .$$

\end{proposition}

\

\subsection{Regularization effect of the wave operator}

The wave kernel $G$ is known to satisfy specific regularization properties in the scale of Sobolev spaces (see e.g. \cite{ginibre-velo}). In the analysis of \eqref{eq-deter-base-bis}, we will only rely on the following controls.

\begin{proposition}\label{prop:regu-wave}
Fix $d\geq 2$.

\smallskip

\noindent
$(i)$ For all $0\leq T \leq 1$, $s>0$ and $w\in L^2([0,T];\ch^{s-1}(\R^d))$, it holds that
\begin{equation}\label{basic-strichartz-1}
\cn\big[ G \ast_{t,x} w;L^\infty([0,T];\ch^s(\R^d))\big] \lesssim \cn[w;L^2([0,T];\ch^{s-1}(\R^d))] \ .
\end{equation}
$(ii)$ For all $0\leq T \leq 1$, $s>0$ and $(\phi_0,\phi_1) \in \ch^s(\R^d) \times \ch^{s-1}(\R^d)$, it holds that
\begin{equation}\label{basic-strichartz-2}
\cn[ \partial_t(G_t \ast_x \phi_0);L^\infty([0,T];\ch^s(\R^d))]\lesssim \| \phi_0\|_{\ch^{s}(\R^d)}
\end{equation}
and
\begin{equation}
\cn[ G_t\ast_x \phi_1 ;L^\infty([0,T];\ch^s(\R^d))]\lesssim \| \phi_1\|_{\ch^{s-1}(\R^d)} \, .
\end{equation}
\end{proposition}

\begin{proof}
Both \eqref{basic-strichartz-1} and \eqref{basic-strichartz-2} lean on elementary estimates.

\smallskip

\noindent
$(i)$ Observe first that
$$\cf_x \big( \big[ G\ast_{t,x} w\big](t,.)\big)(\xi)=\int_0^t ds \, \big( \cf_x G_{t-s}\big)(\xi) \big( \cf_x w_s\big) (\xi) \ ,$$
so that, for every $t\in [0,T]$,
\begin{eqnarray*}
\int_{\R^d} d\xi \, \{1+|\xi|^2\}^{s} \big|\cf_x\big( \big[ G\ast_{t,x} w\big](t,.)\big)(\xi)\big|^2 & \leq & \int_0^t ds \int_{\R^2} d\xi \, \{1+|\xi|^2\}^{s} \frac{\sin^2((t-s)|\xi|)}{|\xi|^2} | ( \cf_x w_s) (\xi)|^2\\
&\lesssim& \int_0^t ds \int_{\R^d} d\xi \, \{1+|\xi|^2\}^{s-1}  | (\cf_x w_s)(\xi)|^2  \ ,
\end{eqnarray*}
hence the conclusion.

\smallskip

\noindent
$(ii)$ The bound for $\cn[ G_t\ast_x \phi_1 ;L^\infty([0,T];\ch^{s}(\R^d))]$ follows from similar arguments as above. Then, along the same idea, observe that
$$\cf_x\big( \partial_t(G_. \ast_x \phi_0)(t,.)\big)(\xi)=\partial_t ( \cf_x G)_t(\xi) (\cf_x \phi_0)(\xi)=\cos(t|\xi|) (\cf_x \phi_0)(\xi) \ ,$$
and so, for every $t\in [0,T]$,
$$
\int_{\R^2} d\xi \, \{1+|\xi|^2\}^{s} \big|\cf_x\big( \partial_t(G_. \ast_x \phi_0)(t,.)\big)(\xi)\big|^2  \leq  \int_{\R^2} d\xi \, \{1+|\xi|^2\}^{s} |(\cf_x \phi_0)(\xi)|^2 \ .
$$

\end{proof}

\subsection{First situation}
Let us first consider the situation where the pair $\mathbf{\Pi}=(\mathbf{\Pi}^{\mathbf{1}},\mathbf{\Pi}^{\mathbf{2}})$ involved in (\ref{eq-deter-base-bis}) (or in (\ref{defi-gamma-phi-t})) belongs to the space
\begin{align*}
&\mathcal{E}\triangleq \big\{\mathbf{\Pi}=(\mathbf{\Pi}^{\mathbf{1}},\mathbf{\Pi}^{\mathbf{2}}) \in L^{\infty}([0,T];L^\infty(\change{\R^d}))^2:\\
&\hspace{2cm}  \text{$\mathbf{\Pi}^{\mathbf{1}}$ and $\mathbf{\Pi}^{\mathbf{2}}$ are both supported by a compact set $D$}\big\} \ .
\end{align*}
Thus, for the moment, $\mathbf{\Pi}^{\mathbf{1}}$ and $\mathbf{\Pi}^{\mathbf{2}}$ are merely (bounded and compactly-supported) \textit{functions}. When going back to the stochastic model (\ref{1-d-quadratic-wave}) and with the result of Proposition \ref{prop:regu-psi} in mind, this situation will later correspond to the \enquote{regular} case $\sum_{i=0}^d H_i>d-\frac12$ (along the splitting of Theorem \ref{main-theo} or Theorem \ref{main-theo-lim}).

\begin{proposition}\label{prop:deterministic-result-d-i}
Fix $d\in \{2,3\}$ and set
\begin{equation*}
X(T)\triangleq L^\infty([0,T];\ch^1(\R^d)) \, .
\end{equation*}
Then, for all $T>0$, $(\phi_0,\phi_1) \in \ch^1(\R^d) \times L^2(\R^d)$, $\mathbf{\Pi}_1=(\mathbf{\Pi}_1^{\mathbf{1}},\mathbf{\Pi}_1^{\mathbf{2}})\in \ce$, $\mathbf{\Pi}_2=(\mathbf{\Pi}_2^{\mathbf{1}},\mathbf{\Pi}_2^{\mathbf{2}})\in \mathcal{E}$ and $v,v_1,v_2 \in X(T)$, the following bounds hold true:
\begin{equation}\label{boun-gamma-1}
\cn[\Gamma_{T,\mathbf{\Pi}_1}(v);X(T)] \lesssim \| \phi_0\|_{\ch^1}+\| \phi_1\|_{L^2}+T^{\frac12} \cn[v;X(T)]^2+ T^{\frac12}  \|\mathbf{\Pi}_1\|\, \cn[v;X(T)]+T^{\frac12} \|\mathbf{\Pi}_1\| \, ,
\end{equation}
and
\begin{align}
&\cn[\Gamma_{T,\mathbf{\Pi}_1}(v_1)-\Gamma_{T,\mathbf{\Pi}_2}(v_2);X(T)]\nonumber\\
&\hspace{1cm}\lesssim T^{\frac12} \cn[v_1-v_2;X(T)]\{\cn[v_1;X(T)]+\cn[v_2;X(T)]\}\nonumber\\
 &\hspace{2cm}+ T^{\frac12}  \|\mathbf{\Pi}_1-\mathbf{\Pi}_2\|\, \cn[v_1;X(T)]+ T^{\frac12}  \|\mathbf{\Pi}_2\|\, \cn[v_1-v_2;X(T)]+T^{\frac12} \|\mathbf{\Pi}_1-\mathbf{\Pi}_2\| \, , \label{boun-gamma-2}
\end{align}
where the proportional constants only depend on $s$ and the norm $\|.\|$ is naturally defined as
$$\|\mathbf{\Pi}\|=\|\mathbf{\Pi}\|_{\mathcal{E}}\triangleq\cn[\mathbf{\Pi}^{\mathbf{1}} ;L^{\infty}([0,T];L^\infty(\change{\R^d}))]+\cn[\mathbf{\Pi}^{\mathbf{2}} ;L^{\infty}([0,T];L^\infty(\change{\R^d}))] \ .$$
\end{proposition}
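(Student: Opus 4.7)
The plan is to decompose $\Gamma_{T,\mathbf{\Pi}}$ into four pieces, namely the two initial-condition contributions $\partial_t(G(t,\cdot)\ast\phi_0)+(G(t,\cdot)\ast\phi_1)$ plus the three wave convolutions against the forcing terms $\rho^2 v^2$, $\rho^2\mathbf{\Pi}^{\mathbf{1}} v$ and $\rho^2\mathbf{\Pi}^{\mathbf{2}}$, and to control each piece in the two components of $X^s(T)=L^\infty_t\cw^{s,2}\cap L^q_tL^r$ via Propositions~\ref{strichartz-cond-ini} and~\ref{strichartz}. For the homogeneous part, I would apply Proposition~\ref{strichartz-cond-ini} twice: once with output pair $(\infty,2)$ and once with the $(d,s)$-admissible pair $(q,r)$ and regularity $s_1=0$; in both cases the relation $\frac{1}{q}+\frac{d}{r}=\frac{d}{2}-s$ makes the required $\mu$ equal to $s$, giving precisely $\|\phi_0\|_{\cw^{s,2}}+\|\phi_1\|_{\cw^{s-1,2}}$.

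For the three inhomogeneous pieces I would again apply Proposition~\ref{strichartz} twice, with output pair either $(\infty,2)$ (and $s_1=s$) or $(q,r)$ (and $s_1=0$), the compatibility condition (\ref{condition-strichartz}) being verified in each case by the $(d,s)$-admissibility of $(q,r)$. The input side is where the three terms diverge:

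\smallskip

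\noindent $\bullet$ For the quadratic term I would pair the Strichartz estimate with a $(d,s)$-dual-admissible pair $(\tilde q,\tilde r)$ and $s_2=0$, so that I must bound $\rho^2 v^2$ in $L^{\tilde q}_tL^{\tilde r}$. Using that $\rho$ is supported in the bounded set $D$, Hölder in space is available precisely when $r\geq 2\tilde r$, yielding $\|\rho^2v^2\|_{L^{\tilde r}}\lesssim \|v\|_{L^r(D)}^2$; a second Hölder in time, valid exactly because $q>2\tilde q$, produces the factor $T^{\frac1{\tilde q}-\frac{2}{q}}$. Both slack conditions are supplied by hypothesis (\ref{condition-1-i}).

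\smallskip

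\noindent $\bullet$ For the linear term $\rho^2\mathbf{\Pi}^{\mathbf{1}} v$ and the pure term $\rho^2\mathbf{\Pi}^{\mathbf{2}}$, I would instead pick the input pair $(\tilde q,\tilde r)=(1,2)$ with $s_2=1-s$, which is admissible in Proposition~\ref{strichartz} and again satisfies (\ref{condition-strichartz}). Then $\|\rho^2\mathbf{\Pi}^{\mathbf{1}} v\|_{\cw^{s-1,2}}\lesssim \|\rho^2\mathbf{\Pi}^{\mathbf{1}} v\|_{L^2(D)}\leq \|\mathbf{\Pi}^{\mathbf{1}}\|_\infty\|v\|_{L^2(D)}\lesssim \|\mathbf{\Pi}^{\mathbf{1}}\|_\infty\|v\|_{\cw^{s,2}}$, and integrating in time over $[0,T]$ delivers the desired factor $T$; the same argument with $v\equiv 1$ handles $\rho^2\mathbf{\Pi}^{\mathbf{2}}$.

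\smallskip

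Summing the four contributions gives (\ref{boun-gamma-1}). The difference bound (\ref{boun-gamma-2}) follows from the linear structure in $\mathbf{\Pi}^{\mathbf{2}}$ and from the identities $v_1^2-v_2^2=(v_1+v_2)(v_1-v_2)$ and $\mathbf{\Pi}_1^{\mathbf{1}}v_1-\mathbf{\Pi}_2^{\mathbf{1}}v_2=(\mathbf{\Pi}_1^{\mathbf{1}}-\mathbf{\Pi}_2^{\mathbf{1}})v_1+\mathbf{\Pi}_2^{\mathbf{1}}(v_1-v_2)$, to which the same estimates apply term by term. There is no real obstacle here beyond the combinatorics of choosing the right input/output pairs: the substantive point is that hypothesis (\ref{condition-1-i}) is precisely what is needed to close the Hölder step on the quadratic term, which is why Proposition~\ref{prop:s-d-admiss} enters the picture as a compatibility check on the range of $s$ for which the fixed-point scheme can be set up.
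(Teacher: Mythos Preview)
Your proposal is correct and matches the paper's approach in structure: decompose $\Gamma_{T,\mathbf{\Pi}}$ into the homogeneous part and the three forcing terms, apply Propositions~\ref{strichartz-cond-ini} and~\ref{strichartz} for each component of $X^s(T)$, and close the quadratic term via H\"older using (\ref{condition-1-i}). The only cosmetic difference is in the linear and pure forcing terms: the paper takes input pair $(\tilde q,\tilde r)=(1,\tilde r_1)$ with $s_2=0$ and $\frac{1}{\tilde r_1}=\frac12+\frac{1-s}{d}$, estimating directly in $L^1_tL^{\tilde r_1}$, whereas you take $(\tilde q,\tilde r)=(1,2)$ with $s_2=1-s$ and use the embedding $L^2\hookrightarrow\cw^{s-1,2}$; both choices satisfy (\ref{condition-strichartz}) and lead to the same bound, so this is a matter of taste rather than substance.
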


\

By combining the two bounds \eqref{boun-gamma-1} and \eqref{boun-gamma-2}, it is now easy to see that for any fixed $\mathbf{\Pi} \in \mathcal{E}$ and any time $T_0>0$ small enough, the map $\Gamma_{T_0,\mathbf{\Pi}}: X(T_0) \to X(T_0)$ is a contraction on a appropriate stable ball of $X(T_0)$, which immediately yields the expected (local) well-posedness result: 
\begin{corollary}\label{cor:deterministic-result-d-i}
Under the assumptions of Proposition \ref{prop:deterministic-result-d-i}, and for all (fixed) $(\phi_0,\phi_1) \in \ch^1(\R^d) \times L^2(\R^d)$, $\mathbf{\Pi}=(\mathbf{\Pi}^{\mathbf{1}},\mathbf{\Pi}^{\mathbf{2}})\in \mathcal{E}$, there exists a time $T_0>0$ such that Equation \eqref{eq-deter-base-bis} admits a unique solution in $X(T_0)$.
\end{corollary}

\smallskip

\begin{proof}[Proof of Proposition \ref{prop:deterministic-result-d-i}]

Using the estimates in Proposition \ref{prop:regu-wave}, we obtain first
\begin{align}
&\cn[\Gamma_{T,\mathbf{\Pi}_1}(v);X(T)] \lesssim \| \phi_0\|_{\ch^1}+\| \phi_1\|_{L^2}+\cn[\rho^2 v^2;L^{2}([0,T];L^2(\R^d))]\nonumber\\
&\hspace{4cm}+\cn[\mathbf{\Pi}^{\mathbf{1}}_1 \cdot \rho v;L^{2}([0,T];L^{2}(\R^d))]+\cn[\mathbf{\Pi}^{\mathbf{2}}_1 ;L^{2}([0,T];L^{2}(\R^d))]\, .\label{appli-wave-op}
\end{align}
In order to bound the last three quantities, we will merely appeal to the elementary continuous embedding: for all compact domain $D\subset \R^d$, $p_0,p_1\geq 1$ and $s_0\geq s_1$,
\begin{equation}\label{sobol-emb}
\cw^{s_0,p_0}(D) \subset \cw^{s_1,p_1}(D) \quad \text{if} \ s_0-\frac{d}{p_0} \geq s_1-\frac{d}{p_1} \, .
\end{equation}

\

\noindent
\textbf{Bound on $\cn[\rho^2 v^2;L^{2}([0,T];L^2(\R^d))]$.} As $\rho$ is supported by some fixed compact domain $D$, one has
$$\cn[\rho^2 v^2;L^{2}([0,T];L^2(\R^d))] =\cn[\rho^2 v^2;L^{2}([0,T];L^2(D))] \lesssim T^{\frac12}\cn[v;L^{\infty}([0,T];L^4(D))]^2 \, .$$
Since $d\in \{2,3\}$, we can assert, by \eqref{sobol-emb}, that $\ch^1(D)\subset L^4(D)$, and thus
$$\cn[ v;L^{\infty}([0,T];L^4(D))]\lesssim \cn[v;X(T)] \, .$$

\

\noindent
\textbf{Bound on $\cn[\mathbf{\Pi}^{\mathbf{1}}_1 \cdot \rho v;L^{2}([0,T];L^{2}(\R^d))]$.} One has of course
$$\cn[\mathbf{\Pi}^{\mathbf{1}}_1 \cdot \rho v;L^{2}([0,T];L^{2}(\R^d))]\lesssim  T^{\frac12}\cn[\mathbf{\Pi}^{\mathbf{1}}_1 ;L^{\infty}([0,T];L^\infty(\R^d))] \, \cn[\rho v;L^{\infty}([0,T];L^2(\R^d))]\, ,$$
and then
$$ \cn[\rho v;L^{\infty}([0,T];L^2(\R^d))]\lesssim \cn[v;L^{\infty}([0,T];L^2(\R^d))]\lesssim  \cn[v;X(T)]\, .$$

\

\noindent
\textbf{Bound on $\cn[\mathbf{\Pi}^{\mathbf{2}}_1 ;L^{2}([0,T];L^{2}(\R^d))]$.} Since $\mathbf{\Pi}^{\mathbf{2}}_1$ is compactly-supported, we immediately have
$$
\cn[\mathbf{\Pi}^{\mathbf{2}}_1 ;L^{2}([0,T];L^{2}(\R^d))] \lesssim T^{\frac12} \cn[\mathbf{\Pi}^{\mathbf{2}}_1 ;L^{\infty}([0,T];L^{\infty}(\R^d))]\, .
$$

\

Injecting the above estimates into \eqref{appli-wave-op} provides us with \eqref{boun-gamma-1}. It is then clear that \eqref{boun-gamma-2} can be derived from similar arguments: for instance,
\begin{align*}
\cn[\rho^2 (v_1^2-v_2^2);L^{2}([0,T];L^2(\R^d))] &\lesssim  T^{\frac12}\cn[v_1^2-v_2^2;L^{\infty}([0,T];L^2(D))] \\
&\lesssim  T^{\frac12}\cn[v_1-v_2;L^{\infty}([0,T];L^{4}(D))] \cn[v_1+v_2;L^{\infty}([0,T];L^{4}(D))]\\
&\lesssim  T^{\frac12} \cn[v_1-v_2;X(T)] \cn[v_1+v_2;X(T)] \ .
\end{align*}

\end{proof}

\subsection{Second situation}
We now turn to the \enquote{irregular} case of our analysis, that will later correspond to item $(ii)$ in Theorem \ref{main-theo} or Theorem \ref{main-theo-lim}. With the result of Proposition \ref{prop:regu-psi-order-two} in mind, we are thus led to consider the situation where the pair $\mathbf{\Pi}=(\mathbf{\Pi}^{\mathbf{1}},\mathbf{\Pi}^{\mathbf{2}})$ in (\ref{defi-gamma-phi-t}) belongs to the space 
\begin{align*}
&\mathcal{E}_{\al,p}\triangleq \big\{\mathbf{\Pi}=(\mathbf{\Pi}^{\mathbf{1}},\mathbf{\Pi}^{\mathbf{2}}) \in L^{\infty}([0,T];\cw^{-\al,p}(\change{\R^d})) \times L^{\infty}([0,T];\cw^{-2\al,p}(\change{\R^d})):\\
&\hspace{2cm}  \text{$\mathbf{\Pi}^{\mathbf{1}}$ and $\mathbf{\Pi}^{\mathbf{2}}$ are both supported by a compact set $D$}\big\} \ ,
\end{align*}
for some positive coefficient $\al$ and some integer $p\geq 2$. In particular, $\mathbf{\Pi}^{\mathbf{1}}$ and $\mathbf{\Pi}^{\mathbf{2}}$ are now both regarded as distributions. Our main result in this setting reads as follows.

\begin{proposition}\label{prop:deterministic-result-d}
Fix $d\in \{2,3\}$, $\al\in (0,\frac14)$ and $s\in (0,1)$ such that
\begin{equation}\label{encadr-s}
s\geq \frac{d}{2}-1 \quad \text{and}\quad \al< s < 1-2\al\, .
\end{equation}
Set
\begin{equation*}\label{defi-x-s-t}
X^s(T)\triangleq L^\infty([0,T];\ch^s(\R^d)) \, ,
\end{equation*} 
and let $p>d$ be defined by the relation
\begin{equation}\label{defi-p-rough}
\frac{1}{p}=\frac{1-(\al+s)}{d} \, .
\end{equation}
Then, for all $T>0$, $(\phi_0,\phi_1) \in \ch^{s}(\R^d) \times \ch^{s-1}(\R^d)$, $\mathbf{\Pi}_1=(\mathbf{\Pi}_1^{\mathbf{1}},\mathbf{\Pi}_1^{\mathbf{2}})\in \ce_{\al,p}$, $\mathbf{\Pi}_2=(\mathbf{\Pi}_2^{\mathbf{1}},\mathbf{\Pi}_2^{\mathbf{2}})\in \mathcal{E}_{\al,p}$ and $v,v_1,v_2 \in X^s(T)$,
the following bounds hold true:
\begin{equation*}
\cn[\Gamma_{T,\mathbf{\Pi}_1}(v);X^s(T)] \lesssim \| \phi_0\|_{\ch^s}+\| \phi_1\|_{\ch^{s-1}}+T^{\frac12} \cn[v;X^s(T)]^2+ T^{\frac12}  \|\mathbf{\Pi}_1\|\, \cn[v;X^s(T)]+T^{\frac12} \|\mathbf{\Pi}_1\| \, ,
\end{equation*}
and
\begin{align*}
&\cn[\Gamma_{T,\mathbf{\Pi}_1}(v_1)-\Gamma_{T,\mathbf{\Pi}_2}(v_2);X^s(T)]\\
&\hspace{1cm}\lesssim T^{\frac12} \cn[v_1-v_2;X^s(T)]\{\cn[v_1;X^s(T)]+\cn[v_2;X^s(T)]\}\\
 &\hspace{2cm}+ T^{\frac12}  \|\mathbf{\Pi}_1-\mathbf{\Pi}_2\|\, \cn[v_1;X^s(T)]+ T^{\frac12}  \|\mathbf{\Pi}_2\|\, \cn[v_1-v_2;X^s(T)]+T^{\frac12} \|\mathbf{\Pi}_1-\mathbf{\Pi}_2\| \, , 
\end{align*}
where the proportional constants depending only on $\al$, $s$, and where the norm $\|.\|$ understood as
$$
\|\mathbf{\Pi}\|=\|\mathbf{\Pi}\|_{\mathcal{E}_{\al,p}}\triangleq\cn[\mathbf{\Pi}^{\mathbf{1}} ;L^{\infty}([0,T];\cw^{-\al,p}(\change{\R^d}))]+\cn[\mathbf{\Pi}^{\mathbf{2}} ;L^{\infty}([0,T];\cw^{-2\al,p}(\change{\R^d}))] \ .
$$
\end{proposition}

\

\begin{remark}
Let us briefly compare this result with the situation treated in \cite[Proposition 3.5]{gubinelli-koch-oh}. At the level of the process $\Psi$ (and so at the level of $\mathbf{\Pi}$ in the above formulation), the situation in \cite[Proposition 3.5]{gubinelli-koch-oh} corresponds to taking $\al=\varepsilon$, for $\varepsilon >0$ as small as one wishes. The latter possibility allows the authors of \cite{gubinelli-koch-oh} to consider a general non-linearity of order $k$ in the model (instead of the quadratic non-linearity in (\ref{1-d-quadratic-wave})): morally, the condition $2\al+s<1$ in \eqref{encadr-s} turns into $k\varepsilon+s<1$, which, by taking $\varepsilon$ small enough, can indeed be satisfied. Our aim here, with the result of Proposition \ref{prop:regu-psi-order-two} in mind, is to handle situations where $\al$ may be close to $\frac14$, which accounts for our restriction to a non-linearity of low order.
\end{remark}

\

Just as in the previous section, we easily deduce from Proposition \ref{prop:deterministic-result-d}:
\begin{corollary}\label{cor:deterministic-result-d}
Under the assumptions of Proposition \ref{prop:deterministic-result-d}, and for all (fixed) $(\phi_0,\phi_1) \in \ch^s(\R^d) \times \ch^{s-1}(\R^d)$, $\mathbf{\Pi}=(\mathbf{\Pi}^{\mathbf{1}},\mathbf{\Pi}^{\mathbf{2}})\in \mathcal{E}_{\al,p}$, there exists a time $T_0>0$ such that Equation (\ref{eq-deter-base-bis}) admits a unique solution in $X^s(T_0)$.
\end{corollary}

\smallskip

\begin{proof}[Proof of Proposition \ref{prop:deterministic-result-d}]
Using the estimates in Proposition \ref{prop:regu-wave}, we obtain first
\begin{align}
&\cn[\Gamma_{T,\mathbf{\Pi}_1}(v);X^s(T)] \lesssim \| \phi_0\|_{\ch^s}+\| \phi_1\|_{\ch^{s-1}}+\cn[\rho^2 v^2;L^{2}([0,T];\ch^{s-1}(\R^d))]\nonumber\\
&\hspace{3cm}+\cn[\mathbf{\Pi}^{\mathbf{1}}_1 \cdot \rho v;L^{2}([0,T];\ch^{s-1}(\R^d))]+\cn[\mathbf{\Pi}^{\mathbf{2}}_1 ;L^{2}([0,T];\ch^{s-1}(\R^d))]\, .
\end{align}
Let us now bound the last three quantities separately.

\

\noindent
\textbf{Bound on $\cn[\rho^2 v^2;L^{2}([0,T];\ch^{s-1}(\R^d))]$.} Thanks to condition \eqref{encadr-s}, and using the general embedding result \eqref{sobol-emb}, one can easily check that for $r:=\frac{d}{d-2s}\geq 1$, one has both
\begin{equation}
L^r(D)\subset \ch^{s-1}(D) \quad \text{and} \quad \ch^s(D)\subset L^{2r}(D)\, ,
\end{equation}
which allows us to write
\begin{align*}
\cn[\rho^2 v^2;L^{2}([0,T];\ch^{s-1}(\R^d))] &\lesssim T^{\frac12}\cn[\rho^2 v^2;L^{\infty}([0,T];\ch^{s-1}(D))]\\
& \lesssim T^{\frac12}\cn[\rho^2 v^2 ;L^{\infty}([0,T];L^r(D))]\\
&\lesssim T^{\frac12}\cn[v ;L^{\infty}([0,T];L^{2r}(D))]^2 \ \lesssim T^{\frac12}\cn[v;X^s(T)]^2 \, .
\end{align*}

\

\noindent
\textbf{Bound on $\cn[\mathbf{\Pi}^{\mathbf{1}}_1 \cdot \rho v;L^{2}([0,T];\ch^{s-1}(\R^d))]$.} Let us introduce the additional parameter $1<\rti< 2$ such that
$$\frac{1}{\rti}=\frac12+\frac{1-(\al+s)}{d} \ .$$
Using \eqref{sobol-emb}, one can check that $\cw^{-\al,\rti}(D)\subset \ch^{s-1}(D)$, and so
$$\cn[\mathbf{\Pi}^{\mathbf{1}}_1 \cdot \rho v;L^{2}([0,T];\ch^{s-1}(\R^d))]\lesssim T^{\frac12}\cn[\mathbf{\Pi}^{\mathbf{1}}_1 \cdot \rho v;L^{\infty}([0,T];\cw^{-\al,\rti}(D))]\, .$$
We are now in a position to apply Proposition \ref{product}, which yields
\begin{align*}
\cn[\mathbf{\Pi}^{\mathbf{1}}_1 \cdot \rho v;L^{\infty}([0,T];\cw^{-\al,\rti}(\R^d))]& \lesssim \cn[\mathbf{\Pi}^{\mathbf{1}}_1 ;L^{\infty}([0,T];\cw^{-\al,p}(\R^d))]\, \cn[v;X^s(T)]\\
&\lesssim \|\mathbf{\Pi}_1\|\, \cn[v;X^s(T)]\, .
\end{align*}

\

\noindent
\textbf{Bound on $\cn[\mathbf{\Pi}^{\mathbf{2}}_1 ;L^{2}([0,T];\ch^{s-1}(\R^d))]$.} We know by \eqref{encadr-s} that $s-1<-2\al$, and so, since $\mathbf{\Pi}^{\mathbf{2}}_1$ is compactly supported, we get immediately
$$ \cn[\mathbf{\Pi}^{\mathbf{2}}_1 ;L^{2}([0,T];\ch^{s-1}(\R^d))] \lesssim T^{\frac12}\cn[\mathbf{\Pi}^{\mathbf{2}}_1 ;L^{\infty}([0,T];\cw^{-2\al,p}(\R^d))]\lesssim T^{\frac12}\|\mathbf{\Pi}_1\| \, .$$

\end{proof}

\section{Proof of the main results}\label{sec:proofs-main-theos}

It remains us to combine the (stochastic) results of Section \ref{sec:stochastic} with the (deterministic) results of Section \ref{sec:auxiliary-equation} in order to derive the proof of our main theorems.

\subsection{Proof of Theorem \ref{main-theo}}\label{sec-proof-main-theo}

Fix $d\in \{2,3\}$ and $(\phi_0,\phi_1) \in \ch^1(\R^d) \times L^2(\R^d)$. Now consider the two situations of the statement: 

\

\noindent
$(i)$ Since $\sum_{i=0}^d H_i>d-\frac12$, we can pick $0<\la < \sum_{i=0}^d H_i-d+\frac12$ and then $p$ large enough so that the continuous embedding $\cw^{\la,p}(D) \subset L^\infty(D)$ holds true. By Proposition \ref{prop:regu-psi}, this puts us in a position to apply Corollary \ref{cor:deterministic-result-d-i} (almost surely) with $\mathbf{\Pi}^{\mathbf{1}}\triangleq 2\rho\Psi$, $\mathbf{\Pi}^{\mathbf{2}}\triangleq \rho^2\Psi^2$. The result immediately follows.

\

\noindent
$(ii)$ Pick $\al >0$ such that $d-\frac12-\sum_{i=0}^d H_i <\al < \frac14$, so that, using Proposition \ref{prop:regu-psi} and Proposition \ref{prop:regu-psi-order-two}, one has, for every $p\geq 2$, $\rho\Psi\in L^\infty([0,T];\mathcal{W}^{-\al,p}(\R^d))$ and $\rho^2\widehat{\mathbf{\Psi}}^{\mathbf{2}}\in  L^\infty([0,T];\mathcal{W}^{-2\al,p}(\R^d))$ a.s. The result now follows from the application of Corollary \ref{cor:deterministic-result-d} with $\mathbf{\Pi}^{\mathbf{1}}\triangleq 2\rho\Psi$, $\mathbf{\Pi}^{\mathbf{2}}\triangleq \rho^2\widehat{\mathbf{\Psi}}^{\mathbf{2}}$ and $s= \frac12$.

\

\subsection{Proof of Theorem \ref{main-theo-lim}}\label{sec-proof-main-theo-lim}

Fix $d\in \{2,3\}$ and $(\phi_0,\phi_1) \in \ch^1(\R^d) \times L^2(\R^d)$. Now consider the two situations of the statement: 

\

\noindent
$(i)$ Let $(u_n)$ be the sequence of classical solutions of \eqref{1-d-quadratic-wave-not-renormalized} and set $v_n\triangleq u_n-\Psi_n$, so that for each fixed $n\geq 1$, $v_n$ clearly satisfies Equation \eqref{eq-deter-base-bis} with $\mathbf{\Pi}^{\mathbf{1}}=\mathbf{\Pi}^{\mathbf{1}}_n\triangleq 2\rho\Psi_n$ and $\mathbf{\Pi}^{\mathbf{2}}=\mathbf{\Pi}^{\mathbf{2}}_n\triangleq \rho^2\Psi_n^2$. We can thus apply \eqref{boun-gamma-1} and assert that for every $T>0$,
\begin{equation}\label{boun-v-n-1}
\cn[v_n;X(T)] \lesssim\| \phi_0\|_{\ch^1}+\| \phi_1\|_{L^2}+T^{\frac12} \cn[v_n;X(T)]^2+ T^{\frac12}  \|\mathbf{\Pi}_n\|\, \cn[v_n;X(T)]+T^{\frac12}  \|\mathbf{\Pi}_n\| \ ,
\end{equation}
where the proportional constant only depends on $\rho$. Besides, using a standard Sobolev embedding, we know that for all $\la >0$ and $p$ large enough (depending on $\la$), 
\begin{equation}\label{boun-psi-n-psi}
\|\mathbf{\Pi}_n-\mathbf{\Pi}\|=\|\mathbf{\Pi}_n-\mathbf{\Pi}\|_{\mathcal{E}} \lesssim   \|\rho\Psi_n-\rho\Psi\|_{L^\infty([0,T];\cw^{\la,p}(D))} +\|\rho^2\Psi_n^2-\rho^2\Psi^2\|_{L^\infty([0,T];\cw^{\la,p}(D))} \ ,
\end{equation}
and so, using Proposition \ref{prop:regu-psi}, we get that for a subsequence of $(\rho\Psi_n)$ (that we still denote by $(\rho\Psi_n)$), $\|\mathbf{\Pi}_n-\mathbf{\Pi}\| \to 0$ almost surely. In particular, $\sup_n \|\mathbf{\Pi}_n\| < \infty$ a.s. Going back to \eqref{boun-v-n-1} and setting $f_n(T) \triangleq \cn[v_n;X(T)]$, we deduce that for all $0<T_0\leq 1$ and $0<T\leq T_0$,
$$
f_n(T) \leq C_1 \big\{ A+T_0^{1/2} f_n(T)^2\big\}  \ ,
$$
for some (random) constant $C_1\geq 1$, and where we have set $A\triangleq 1+\| \phi_0\|_{\ch^1}+\| \phi_1\|_{L^2}$. At this point, let us fix the (random) time $T_1>0$ satisfying $1-4C_1^2 A T_1^{1/2}=\frac12$, in such a way that for every $0<T_0\leq T_1$, the equation $C_1T_0^{1/2} x^2-x+C_1 A=0$ admits two solutions $x_{1,T_0},x_{2,T_0}$ satisfying $0<x_{1,T_0}<x_{2,T_0}$. As a result, for all $0<T_0 \leq \inf(1,T_1)$ and $0<T\leq T_0$, one has either $f_n(T) \leq x_{1,T_0}$ or $f_n(T) \geq x_{2,T_0}$. In fact, due to the continuity of $T \mapsto f_n(T)$ (a straighforward consequence of the regularity of $v_n$), one has either $\sup_{T\in [0,T_0]} f_n(T) \leq x_{1,T_0}$ or $\inf_{T\in [0,T_0]} f_n(T) \geq x_{2,T_0}$. Moreover, if we define $T_2>0$ as the largest time such that $ (\sup_{0\leq T \leq T_2} 2C_1T^{1/2} \|\phi_0\|_{\ch_1}) \leq 1$, it can be explicitly checked that for every $0 <T_0\leq \inf(1,T_1,T_2)$, $f_n(0)=\|\phi_0\|_{\ch^1} \leq x_{1,T_0}$, and we are therefore in a position to assert that for such a time $T_0$ (that we fix from now on), 
$$\sup_{n\geq 1} \cn[v_n ;X(T_0)] =\sup_{n\geq 1} \sup_{T\in [0,T_0]} f_n(T) \leq  x_{1,T_0} \ .$$
By injecting this uniform bound into (\ref{boun-gamma-2}), we easily derive that, for some time $0<\bar{T}_0 \leq T_0$ (uniform in $n$), 
$$\cn[v_n-v;X^s(\bar{T}_0)] \lesssim \| \mathbf{\Psi}-\mathbf{\Psi}_n\| \ ,$$
where the proportional constant is also uniform in $n$. Finally,
$$
\cn[u_n-u;L^\infty([0,\bar{T}_0];L^2(D))] \lesssim \cn[\Psi_n-\Psi;L^\infty([0,\bar{T}_0];L^2(D))] +\cn[v_n-v;X(\bar{T}_0)] 
$$
and the convergence immediately follows.

\

\noindent
$(ii)$ We can of course use the very same arguments as above, by noting that if $(u_n)$ satisfies \eqref{1-d-quadratic-wave-renormalized-lim} and $v_n \triangleq v_n-\Psi_n$, then $v_n$ satisfies \eqref{eq-deter-base-bis} with $\mathbf{\Pi}^{\mathbf{1}}=\mathbf{\Pi}^{\mathbf{1}}_n\triangleq 2\rho\psi_n$ and $\mathbf{\Pi}^{\mathbf{2}}=\mathbf{\Pi}^{\mathbf{2}}_n\triangleq \rho^2\widehat{\mathbf{\Psi}}^{\mathbf{2}}_n$. The bound \eqref{boun-psi-n-psi} must then be replaced with 
\begin{multline*}
\| \mathbf{\Pi}_n-\mathbf{\Pi}\|=\| \mathbf{\Pi}_n-\mathbf{\Pi}\|_{\mathcal{E}_{\al,p}}=\\
\cn[2\rho\Psi_n-2\rho\Psi ;L^{\infty}([0,T];\cw^{-\al,p}(D))]+\cn[\rho^2\widehat{\mathbf{\Psi}}^{\mathbf{2}}_n-\rho^2\widehat{\mathbf{\Psi}}^{\mathbf{2}} ;L^{\infty}([0,T];\cw^{-2\al,p}(D))] \ ,
\end{multline*}
which allows us to apply Proposition \ref{prop:regu-psi} and Proposition \ref{prop:regu-psi-order-two} in the procedure. Observe finally that
\begin{align*}
&\cn[u_n-u;L^\infty([0,T];\ch^{-\al}(D))] \lesssim \cn[\Psi_n-\Psi;L^\infty([0,T];\ch^{-\al}(D))] +\cn[v_n-v;X^\frac12(T)] \ ,
\end{align*}
which yields the expected convergence.

\

\


\begin{thebibliography}{99}


\bibitem{balan}
R.M. Balan: The Stochastic Wave Equation with Multiplicative Fractional Noise: A Malliavin Calculus Approach. {\it Potential Analysis}, {\bf 36} (2012), no. 1, 1--34.

\smallskip

\bibitem{balan-jolis-quer}
R.M. Balan, M. Jolis and L. Quer-Sardanyons: SPDEs with affine multiplicative fractional noise in space with index $1/4<H<1/2$. {\it Electron. J. Probab.} {\bf 20} (2015), no. 54, 36 pp.

\smallskip

\bibitem{balan-tudor}
R.M. Balan and C.A. Tudor: The stochastic wave equation with fractional noise: a random field approach. {\it Stoch. Proc. Their Appl.} {\bf 120} (2010), 2468-2494.


\smallskip

\bibitem{bourgain}
J. Bourgain: Invariant measures for the 2D-defocusing nonlinear Schr{\"o}dinger equation. {\it Comm. Math. Phys.} {\bf 176} (1996), no. 2, 421-445.

\smallskip

\bibitem{burq-tzvetkov} 
N. Burq and N. Tzvetkov: Random data Cauchy theory for supercritical wave equations I: local existence theory. {\it Invent. Math.} {\bf 173} (2008), no. 3, 449-475.

\smallskip

\bibitem{caithamer}
P. Caithamer: The stochastic wave equation driven by fractional Brownian noise and temporally correlated smooth noise. {\it Stoch. Dyn.} {\bf 5} (2005), no. 1, 45-64.

\smallskip




\smallskip

\bibitem{daprato-zabczyk}
G. Da Prato and J. Zabczyk: Stochastic equations in infinite dimensions. Encyclopedia of Mathematics and its Applications, 152. Cambridge University Press, Cambridge, 2nd edition, 2014.

\smallskip

\bibitem{deya}
A. Deya: On a modelled rough heat equation. {\it Probab. Theory Related Fields} {\bf 166} (2016), no. 1, 1-65.

\smallskip

\bibitem{deya2}
A. Deya: Construction and Skorohod representation of a fractional $K$-rough path. {\it Electron. J. Probab.}, {\bf 22} (2017), no. 52.

\smallskip

\bibitem{deya-wave-2}
A. Deya: On a non-linear 2D fractional wave equation. {\it Ann. Inst. H. Poincar{\'e} Probab. Statist.}, 56(1), 477-501, 2020.

\smallskip

\bibitem{RHE}
A. Deya, M. Gubinelli and S. Tindel: Non-linear rough heat equations.  {\it Probab. Theory Related Fields}  {\bf 153}  (2012),  no. 1-2, 97--147.


\smallskip

\bibitem{erraoui-ouknine-nualart}
M. Erraoui, Y. Ouknine and D. Nualart: Hyperbolic Stochastic Partial Differential Equations with Additive Fractional Brownian Sheet. {\it Stoch. Dyn.} {\bf 3} (2003), no. 121. 

\smallskip
 
\bibitem{ginibre-velo}
J. Ginibre and G. Velo: Generalized Strichartz inequalities for the wave equation, {\it J. Funct. Anal.} {\bf 133} (1995), 50-68.

\smallskip

\bibitem{gubinelli-koch-oh}
M. Gubinelli, H. Koch and T. Oh: Renormalization of the two-dimensional stochastic non linear wave equations. {\it Trans. Amer. Math. Soc.} {\bf 370} (2018), 7335-7359.

\smallskip

\bibitem{GLT}
M. Gubinelli, A. Lejay and S. Tindel: Young integrals and SPDEs. {\it Pot. Anal.} {\bf 25} (2006), no. 4, 307--326.  

\smallskip

\bibitem{REE}
M. Gubinelli and S. Tindel: Rough evolution equations.  {\it Ann. Probab.}  {\bf 38} (2010), no. 1, 1--75.

\smallskip

\bibitem{hai-14}
M. Hairer: A theory of regularity structures. {\it Invent. Math.} {\bf 198} (2014), no. 2, 269--504.

\smallskip

\bibitem{hu}
Y. Hu: Heat equation with fractional white noise potentials. {\it Appl. Math. Optim.} {\bf 43} (2001), 221-243.

\smallskip

\bibitem{hu-lu-nualart}
Y. Hu, F. Lu and D. Nualart: Feynman-Kac formula for the heat equation driven by fractional noise with Hurst parameter $H<1/2$. {\it Ann. Probab.} {\bf 40} (2012), 1041-1068.

\smallskip

\bibitem{hu-nualart-song}
Y. Hu, D. Nualart and J. Song: Feynman-Kac formula for heat equation driven by fractional white noise. {\it Ann. Probab.} {\bf 39} (2011), 291-326.

\smallskip



\bibitem{oh-thomann}
T. Oh and L. Thomann: Invariant Gibbs measure for the 2-d defocusing nonlinear wave equations. {\it Ann. Fac. Sci. Toulouse Math.} {\bf 6} 29 (2020), no. 1, 1-26.      

\smallskip

\bibitem{quer-tindel}
L. Quer-Sardanyons and S. Tindel: The 1-d stochastic wave equation driven by a fractional Brownian motion. {\it Stoch. Proc. Their Appl.}, {\bf 117} (2007), 1448-1472. 

\smallskip

\bibitem{runst-sickel}
T. Runst and W. Sickel: Sobolev Spaces of Fractional Order, Nemytskij Operators, and Nonlinear Partial Differential Equations. de Gruyter Series in Nonlinear Analysis and Applications 3, Berlin (1996).

\smallskip

\bibitem{samo-taqqu}
G. Samorodnitsky and M. S. Taqqu: Stable non-Gaussian random processes. Chapman and Hall, 1994.


\smallskip

\bibitem{thomann}
L. Thomann: Random data Cauchy problem for supercritical Schr{\"o}dinger equations. {\it Ann. I. H. Poincar{\'e} - AN}, {\bf 26} (2009), no. 6, 2385--2402. 

\smallskip

\bibitem{tindel-tudor-viens}
S. Tindel, C. Tudor and F. Viens: Stochastic evolution equations with fractional Brownian motion. {\it Probab. Theory Related Fields}, {\bf 127} (2003) 186-204.

\smallskip

\bibitem{walsh}
J.B. Walsh: An introduction to stochastic partial differential equations. In: {\it {\'E}cole d'{\'e}t{\'e} de probabilit{\'e}s de Saint-Flour}, XIV-1984. Lecture Notes in Mathematics, vol. 1180, pp. 265-439. Springer, Berlin (1986).


\end{thebibliography}
\end{document}